\theoremstyle{definition}
\newtheorem{theorem}{Theorem}[section]
\newtheorem{lemma}[theorem]{Lemma}
\newtheorem{definition}[theorem]{Definition}
\newtheorem{remark}[theorem]{Remark}
\newtheorem*{theorem*}{Theorem}
\def\qed{\hfill{Q.E.D.}\smallskip}
\begin{document}

\title{\bf Generalized circle patterns on surfaces with cusps}
\author{Zhiwen Xiong, Xu Xu}
	
\date{\today}
	
\address{School of Mathematics and Statistics, Wuhan University, Wuhan, 430072, P.R.China}
\email{xiongzhiwen@whu.edu.cn}

\address{School of Mathematics and Statistics, Wuhan University, Wuhan, 430072, P.R.China}
\email{xuxu2@whu.edu.cn}
	
\thanks{MSC (2020): 52C25,52C26}
	
\keywords{Generalized circle patterens; Existence; Combinatorial curvature flow}
	
\begin{abstract}
In \cite{G-L}, Guo and Luo introduced generalized circle patterns on surfaces and proved their rigidity.
In this paper, we prove the existence of Guo-Luo's generalized circle patterns with prescribed generalized intersection angles on surfaces with cusps, which partially answers a question raised by Guo-Luo in \cite{G-L} and generalizes Bobenko-Springborn's hyperbolic circle patterns on closed surfaces to generalized hyperbolic circle patterns on surfaces with cusps. We further introduce the combinatorial Ricci flow and combinatorial Calabi flow for generalized circle patterns on surfaces with cusps, and prove the longtime existence and convergence of the solutions for these combinatorial curvature flows.

\end{abstract}
	
\maketitle
		
\section{Introduction}

\subsection{Generalized circle patterns on surfaces }
In \cite{B-S}, Bobenko and Springborn introduced a new type of circle patterns on closed surfaces, which is a generalization of Thurston's circle pattern in \cite{T1}.
In \cite{G-L}, Guo and Luo  further generalized Bobenko-Springborn's hyperbolic circle patterns on closed surfaces to eight other types of generalized circle patterns on surfaces and proved the rigidity of these generalized circle patterns.  We briefly recall Bobenko-Springborn's work \cite{B-S} and Guo-Luo's
work in \cite{G-L}. Let $(\Sigma,G)$ be a cellular decomposed surface with the set of vertices $V$, edges $E$ and faces $F$. For $v\in V$, $e\in E$ and $f\in F$, we use $v<e$ to denote that $v$ is an endpoint of $e$, and use $e<f$ to denote that $e$ is an edge of $f$.
Suppose $G^\ast$ is the dual cellular decomposition of $G$ and $V^\ast$ is the vertex set of $G^\ast$. Then each $f$ in $F$ contains exactly one vertex $f^\ast$ in $V^\ast$.
A generalized hyperbolic triangle $\triangle$ in $\mathbb{H}^2$ is a convex polygon bounded by three distinct geodesics $L_1$, $L_2$, $L_3$ and all other (if any) geodesics $L_{ij}$ perpendicular to both $L_i$ and $L_j$. In Guo-Luo's work \cite{G-L}, the type of a vertex $v$ is defined to be $1$, $0$ and $-1$ if $v\in \mathbb{H}^2$, $v\in \partial\mathbb{H}^2$ and $v\notin \mathbb{H}^2\cup\partial\mathbb{H}^2$, respectively. In this paper, we follow Guo-Luo to use this notation. In this way, one can classify the generalized hyperbolic triangles into ten types $(\epsilon_1,\epsilon_2,\epsilon_3)$,
where $\epsilon_i\in \{-1,0,1\}$. Suppose $L_i$ and $L_j$ are two adjacent edges of a vertex $v$. If $v$ is of type $-1$, denote $L_{ij}$ as the geodesic perpendicular to both $L_i$ and $L_j$.  For each vertex $v$ of the generalized hyperbolic triangle, let $B_v=v$ if $v$ is of type $1$, $B_v$ be a horocycle centered at $v$ if $v$ is of type $0$ and $B_v=L_{ij}$ if $v$ is of type $-1$. The generalized length of the edge $L_{i}\cap \triangle$ with vertices $v$, $u$ is the distance of $B_v$ and $B_u$ if $B_v\cap B_u=\emptyset$ and is the negative distance of $B_v\cap L_i$ and $B_u\cap L_i$ if $B_v\cap B_u\ne\emptyset$. The generalized angle $\beta_v$ at the vertex $v$ is defined as follows. If $v$ is of type $1$, $\beta_v$ equals to the intersection angle of $L_i$ and $L_j$;  if $v$ is of type $0$, $\beta_v$ equals to the arc length of the intersection of the horocycle $B_v$ bounded by $L_i$ and $L_j$; if $v$ is of type $-1$, $\beta_v$ equals to the distance between $L_i$ and $L_j$. Then we obtain a $(\epsilon_1,\epsilon_2,\epsilon_3)$-type generalized hyperbolic triangle with generalized edge lengths and generalized angles. The hyperbolic triangle in Bobenko-Springborn's construction on closed surfaces \cite{B-S} is of type $(1,1,1)$.
Replacing the hyperbolic triangle of type $(1,1,1)$ by the generalized hyperbolic triangle of type $(\epsilon,\epsilon,\delta)\in \{1,0,-1\}$,  Guo-Luo \cite{G-L} construct several new types of generalized circle patterns on polyhedral surfaces in the following way.

Set
\begin{eqnarray*}
\mathring{I}_\lambda=
\begin{cases}
\mathbb{R}_{>0} &\text{if}\ \lambda=-1,0,\\
(0,\pi) &\text{if}\ \lambda=1,
\end{cases} \quad
J_\mu=
\begin{cases}
\mathbb{R}_{>0} &\text{if}\ \mu=-1,1,\\
\mathbb{R}  &\text{if}\ \mu=0.
\end{cases}
\end{eqnarray*}
Given two functions $r: F\to J_{\epsilon\delta}$ and $\theta: E\to \mathring{I}_\delta$, for $e=f_1\cap f_2\in E$ with two endpoints $v_1,v_2\in V$,
one can construct a $(\epsilon,\epsilon,\delta)$ type generalized hyperbolic triangle $f^\ast_{1}v_1f^\ast_{2}$ with the generalized edge lengths $|f^\ast_{1}v_1|=r(f_{1})$, $|f^\ast_{2}v_1|=r(f_{2})$ and the generalized angle $\angle f^\ast_{1}v_1f^\ast_{2}=\theta(e)$ of type $\delta$.
Realize the hyperbolic  quadrilateral $f^\ast_{1}v_1f^\ast_{2}v_2$  as the isometric double of the generalized  hyperbolic triangle $f^\ast_{1}v_1f^\ast_{2}$ across the edge $f^\ast_{1}f^\ast_{2}$. For each edge $e\in E$, we construct a  hyperbolic quadrilateral in the same way. Then by isometrically gluing these  hyperbolic quadrilaterals, one obtains a $(\epsilon,\epsilon,\delta)$ type generalized circle patterns on polyhedral surfaces. We use $\beta_{(e,f_1)}$ to denote the generalized angle at $f^\ast_{1}$ in the generalized hyperbolic triangle $f^\ast_{1}v_1f^\ast_{2}$.
One can refer to Figure \ref{Figure 1} and Figure \ref{Figure 3} for an illustration of the angle $\beta_{(e,f_1)}$ in the $(1, 1, 0)$, $(1, 1, -1)$, $(0, 0, 1)$, $(0, 0, 0)$, $(0, 0, -1)$ type generalized hyperbolic triangles.
The generalized combinatorial curvature at $f^\ast$ is defined as
$K_f=\sum_{e;e<f}2\beta_{(e,f)}$.

Bobenko-Springborn \cite{B-S} proved the following existence and uniqueness result for the $(1,1,1)$ type hyperbolic circle patterns.

\begin{theorem}[\cite{B-S}, Theorem 3]
\label{Thm: BS}
Suppose $\Sigma$ is a cellular decomposed closed surface.
For any two functions $\theta: E\to (0,\pi)$ and $K: F\to (0,+\infty)$,
there exists a unique $(1,1,1)$ type hyperbolic circle pattern with the generalized angle $\theta$ and the generalized combinatorial curvature $K$ if and only if
\begin{equation*}
\sum_{f\in F^\prime} K_f
<\sum_{e\in E^\prime} 2(\pi-\theta(e)).
\end{equation*}
Here $F^\prime$ is a nonempty subset of $F$ and $E^\prime$ is the set of all edges incident to any face in $F^\prime$.
\end{theorem}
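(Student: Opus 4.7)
The plan is to apply the standard variational method: reformulate the existence of a $(1,1,1)$ circle pattern with prescribed curvature $K$ as the critical-point problem for a strictly convex, coercive functional on $\mathbb{R}^F$, arranging matters so that coercivity is equivalent to the combinatorial condition in the theorem.

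First I would fix coordinates $u(f)=\log\tanh(r(f)/2)$ on the configuration space, so that $\{r:F\to\mathbb{R}_{>0}\}$ becomes an open subset of $\mathbb{R}^F$. Applying the hyperbolic cosine law to the fixed-$\theta(e)$ triangle $f_1^\ast v_1 f_2^\ast$, each angle $\beta_{(e,f)}$ becomes a smooth function of $(u(f_1),u(f_2))$. The crucial local identity is the Schl\"afli-type symmetry $\partial \beta_{(e,f_1)}/\partial u(f_2)=\partial\beta_{(e,f_2)}/\partial u(f_1)$, so that the $1$-form $\omega=\sum_f (K_f(u)-K_f^{\mathrm{pre}})\,du(f)$ is closed on $\mathbb{R}^F$. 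Integrating from a fixed base point produces a potential $\mathcal{E}(u)$ whose critical points are precisely the circle patterns realising $K^{\mathrm{pre}}$. Strict convexity of $\mathcal{E}$ is then obtained by decomposing its Hessian into a sum of $2\times 2$ blocks, one per edge, and checking by a direct hyperbolic-trigonometry computation that each block is positive semidefinite and that the total is positive definite, in the spirit of Colin de Verdi\`ere. This immediately yields uniqueness.

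The main obstacle, and the only place where the combinatorial hypothesis enters, is coercivity. I would analyse $\mathcal{E}(u)$ along a divergent ray in $\mathbb{R}^F$ and let $F'\subset F$ be the set of faces whose coordinates blow up fastest. On any edge shared by two faces of $F'$, the angle $\beta_{(e,f)}$ degenerates to $\pi-\theta(e)$, while the remaining contributions stay bounded. A careful bookkeeping then shows that the leading coefficient of $\mathcal{E}$ along the ray is
\[
\sum_{e\in E'} 2(\pi-\theta(e)) - \sum_{f\in F'} K_f,
\]
which is positive precisely by the stated hypothesis; hence $\mathcal{E}$ is proper and attains its minimum, giving existence. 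Conversely, once a realisation exists, summing $K_f=\sum_{e<f}2\beta_{(e,f)}$ over $F'$ and using the hyperbolic triangle inequality $\beta_{(e,f_1)}+\beta_{(e,f_2)}+\theta(e)<\pi$ forces the combinatorial condition, so necessity follows. The delicate point is making the degeneration estimates sharp enough that the leading coercivity constant matches the stated inequality exactly rather than a weaker bound; this requires a uniform analysis of which faces in $F'$ also have all neighbours in $F'$ versus those with boundary edges in $E'$.
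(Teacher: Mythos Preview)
The paper does not prove this theorem: Theorem~\ref{Thm: BS} is quoted from Bobenko--Springborn \cite{B-S} as background, with a pointer to Guo \cite{Guo} for alternative proofs, and no argument is given in the present paper. So there is no ``paper's own proof'' to compare your proposal against.

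Your variational outline is essentially the original Bobenko--Springborn strategy and is sound. One gap worth flagging: with $u(f)=\log\tanh(r(f)/2)$ the domain is the open cube $(-\infty,0)^{|F|}$, not all of $\mathbb{R}^F$. Your coercivity discussion treats only escape to $-\infty$ (small radii) via the subset $F'$, but you also need to exclude minimisers drifting toward the finite boundary $u_i\to 0^-$ (large radii). That case is easy---there $\beta_{(e,f_i)}\to 0$, so $\partial\mathcal{E}/\partial u_i\to K_i^{\mathrm{pre}}>0$ forces the minimiser back into the interior---but it should be stated, since without it properness fails.

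For context, when the paper proves its own analogues (Theorems~\ref{existence 2} and~\ref{existence 1} for types $(1,1,0)$ and $(0,0,\delta)$) it does \emph{not} use your coercivity route. Instead it runs a continuity argument: Guo--Luo's rigidity (Theorem~\ref{Thm: GL}) makes the curvature map $Q:U\to\mathbb{R}^{|F|}$ a smooth embedding, so $Q(U)$ is open in the candidate target; the angle-degeneration Lemmas~\ref{limit 2} and~\ref{limit 1} then show $Q(U)$ is closed in the target. The asymptotic computations required are the same ones you would need for coercivity, so the two approaches are close cousins; the paper's packaging simply avoids integrating the potential and arguing about its growth.
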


Bobenko-Springborn also proved the existence and uniqueness for the Euclidean circle patterns on closed surfaces in \cite{B-S}.
One can also refer to Guo \cite{Guo} for different proofs of  Theorem \ref{Thm: BS}.
Recently, Nie \cite{Nie} generalized Bobenko-Springborn's work to the spherical circle patterns on closed surfaces by introducing combinatorial  total geodesic curvature.

In \cite{G-L}, Guo-Luo proved the following rigidity result for the generalized circle patterns on surfaces.
\begin{theorem}[\cite{G-L}, Theorem 1.6]
\label{Thm: GL}
Under the setup of generalized circle pattern above, for any $(\epsilon,\epsilon,\delta)\in \{-1,0,1\}^3$ and $\theta: E\to \mathring{I}_\delta$,
the map $Q:J_{\epsilon\delta}^{|F|} \rightarrow \mathbb{R}^{|F|}$ sending $r$ to $K$ is a smooth embedding.
\end{theorem}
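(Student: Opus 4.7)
The plan is to run the standard variational argument for hyperbolic polyhedral surfaces, originating with Colin de Verdière and refined by Bobenko–Springborn, Guo, and others: construct a strictly convex energy function $E(u)$ on $J_{\epsilon\delta}^{|F|}$ (possibly after a change of variables) whose gradient is exactly the curvature vector $K$, and deduce that $Q$ is a smooth embedding because the gradient of a strictly convex function is injective and has everywhere nondegenerate differential.

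First I would work locally on a single generalized hyperbolic triangle $f_1^\ast v_1 f_2^\ast$ of type $(\epsilon,\epsilon,\delta)$, with generalized side lengths $r_1=r(f_1)$, $r_2=r(f_2)$ and generalized angle $\theta=\theta(e)$ at $v_1$. Using the generalized hyperbolic cosine/sine laws for each of the nine types $(\epsilon,\epsilon,\delta)$ that Guo–Luo enumerate, I would compute the partial derivatives $\partial\beta_{(e,f_i)}/\partial r_j$ and then exhibit a change of variables $u_i=u_i(r_i)$ adapted to $J_{\epsilon\delta}$ (for example $u_i=\ln\tanh(r_i/2)$, $u_i=\ln r_i$, or $u_i=r_i$ depending on whether $\epsilon\delta$ is $1$, $0$, or $-1$) such that the local $1$-form
\begin{equation*}
\omega_e = \beta_{(e,f_1)}\,du_1 + \beta_{(e,f_2)}\,du_2
\end{equation*}
is closed. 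Equivalently, the mixed partial condition $\partial\beta_{(e,f_1)}/\partial u_2 = \partial\beta_{(e,f_2)}/\partial u_1$ must be verified case by case; this symmetry is the technical core and is where the special choice of $u$ shows its value.

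Once $\omega_e$ is closed on the (convex) domain of $(u_1,u_2)$, I would integrate it to a local potential $W_e(u_1,u_2)$, and then define the global energy
\begin{equation*}
E(u)=\sum_{e\in E} 2W_e\bigl(u_{f_1(e)},u_{f_2(e)}\bigr).
\end{equation*}
A direct computation shows $\partial E/\partial u_f = K_f$, so the map $Q$ becomes, after the diffeomorphism $r\mapsto u$, the gradient of $E$. To conclude, I must show that $E$ is strictly convex. It suffices to show that the Hessian of each $W_e$ is positive definite, i.e.\ that the symmetric $2\times 2$ matrix $\bigl(\partial\beta_{(e,f_i)}/\partial u_j\bigr)$ is positive definite, and in fact that the full Hessian of $E$ is positive definite on $\mathbb{R}^{|F|}$.

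The main obstacle is exactly this local positive-definiteness step, which must be carried out in each of the nine type cases. The generalized triangles with horocyclic ($\epsilon=0$) or ultra-ideal ($\epsilon=-1$) vertices require nonstandard trigonometric identities, and in the mixed cases one must also verify that the natural $u$-coordinate sends $J_{\epsilon\delta}$ to a convex subset of $\mathbb{R}$ so that the closed form $\omega_e$ actually integrates to a global potential. Once positive-definiteness is in hand, strict convexity of $E$ on the convex domain implies that $\nabla E$, hence $Q$, is injective and has invertible differential everywhere, so $Q$ is a smooth embedding as claimed.
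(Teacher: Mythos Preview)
Your outline is essentially Guo--Luo's own argument, and this paper does not reprove the theorem but simply cites it; the paper only reproduces, for the cases it needs ($\epsilon\delta=0$), the ingredients you describe: the coordinate change $u_i=-2e^{-r_i}$, the symmetry of $\partial(\beta_1,\beta_2)/\partial(u_1,u_2)$, and its definiteness (Lemmas~\ref{Lem: matrix negative 2} and~\ref{Lem: matrix negative 1}), from which the strictly convex potential $\mathcal{E}$ with $\nabla\mathcal{E}=-K$ is built. One sign to watch: in the paper's coordinate the local Jacobian is \emph{negative} definite, so the convex potential carries $-\beta\,du$ and has gradient $-K$ rather than $+K$; this is harmless for the embedding conclusion but you should keep your signs consistent with whichever coordinate you actually pick in each of the nine cases.
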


Guo-Luo (\cite{G-L} page 1277) further raised the question to investigate the image of the generalized combinatorial curvature $K$, which is equivalent to the existence of the generalized circle patterns on surfaces.
In this paper, we study the existence of generalized circle patterns of the type $(1, 1, 0)$, $(0, 0, 1)$, $(0, 0, 0)$ and $(0, 0, -1)$. Note that the surfaces obtained by gluing  generalized hyperbolic quadrilaterals of type $(1, 1, 0)$ are hyperbolic surfaces with cusps at $V$ and possibly conical points at $V^\ast$. The surfaces obtained by gluing generalized hyperbolic quadrilaterals of type $(0, 0, \delta)$ are hyperbolic surfaces with cusps at $V^\ast$ and possibly singularities at $V$. These singularities may be conical points, cusps or flares, corresponding to the case that $\delta$ equals to $1$, $0$ or $-1$, respectively. By using the continuity method, we give the following existence theorem for these generalized circle patterns.
It generalizes Bobenko-Springborn's results on hyperbolic circle patterns on closed surfaces in \cite{B-S} to generalized hyperbolic circle patterns on surfaces with cusps
and partially answers the question raised by Guo-Luo in \cite{G-L}.

\begin{theorem}\label{thm of existence}
Suppose $\Sigma$ is a cellular decomposed surface. Denote $K_{f_i}$ as $K_i$ for $i=1,...,|F|$.
\begin{enumerate}
\item [(1)] If $\epsilon=1$, $\delta=0$ and $\theta: E\to \mathbb{R}_{>0}$, then for any function $\hat{K}=(\hat{K}_1,...,\hat{K}_{|F|})\in \mathbb{R}^{|F|}_{>0}$,
there exists a unique $(1, 1, 0)$ type generalized circle pattern with radii $r=(r_{1},..., r_{|F|})\in \mathbb{R}^{|F|}$ such that
\begin{equation*}
	K_i(r)=\hat{K}_i , \quad \forall i\in \{1,...,|F|\},
\end{equation*}
if and only if
\begin{equation*}
	\sum_{f_i\in F^\prime} \hat{K_i}
	<2 |E^\prime| \pi.
\end{equation*}
Here $F^\prime$ is any nonempty subset of $F$ and $E^\prime$ is the set of all edges incident to any face in $F^\prime$.
\item [(2)] If $\epsilon=0$, $\delta \in \{1,-1,0\}$ and $\theta: E\to \mathring{I}_\delta$,
then for any function $\hat{K}=(\hat{K}_1,...,\hat{K}_{|F|})\in \mathbb{R}^{|F|}_{>0}$,
there exists a unique $(0, 0, \delta)$ type generalized circle pattern with radii $r=(r_{1},..., r_{|F|})\in \mathbb{R}^{|F|}$ such that
\begin{equation*}
	K_i(r)=\hat{K}_i , \quad \forall i\in \{1,...,|F|\},
\end{equation*}
if and only if $\hat{K}\in \mathbb{R}_{>0}^{|F|}$.
\end{enumerate}
\end{theorem}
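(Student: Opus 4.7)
The plan is to prove Theorem \ref{thm of existence} by the continuity method, combined with Guo-Luo's rigidity (Theorem \ref{Thm: GL}). Let $\Omega\subseteq\mathbb{R}^{|F|}$ denote the admissible region of curvature vectors: in case (1),
\[
\Omega=\Bigl\{\hat K\in\mathbb{R}^{|F|}_{>0}\ \Big|\ \sum_{f_i\in F'}\hat K_i<2|E'|\pi\text{ for every nonempty }F'\subseteq F\Bigr\},
\]
and in case (2), $\Omega=\mathbb{R}^{|F|}_{>0}$. Since $\epsilon\delta=0$ in both parts, the parameter space is $J_{\epsilon\delta}^{|F|}=\mathbb{R}^{|F|}$, and $\Omega$ is connected (convex, respectively an open orthant). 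By Theorem \ref{Thm: GL}, $Q:\mathbb{R}^{|F|}\to\mathbb{R}^{|F|}$ is a smooth embedding, so $\mathrm{Im}(Q)$ is a nonempty open subset of $\mathbb{R}^{|F|}$ and $Q^{-1}$ is continuous on it. It therefore suffices to establish (a) $\mathrm{Im}(Q)\subseteq\Omega$ (necessity), and (b) $\mathrm{Im}(Q)$ is closed in $\Omega$ (sufficiency); connectedness of $\Omega$ then forces $\mathrm{Im}(Q)=\Omega$, yielding both existence and uniqueness.

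For necessity (a) in case (1), I would sum $K_f$ over any nonempty $F'\subseteq F$ and regroup the terms $2\beta_{(e,f)}$ by edges in $E'$. Each edge $e\in E'$ contributes either $2\beta_{(e,f_1)}+2\beta_{(e,f_2)}$ (both incident faces in $F'$) or $2\beta_{(e,f)}$ (only one incident face in $F'$). In a $(1,1,0)$ generalized hyperbolic triangle the third vertex is ideal, so Gauss--Bonnet gives the strict inequality $\beta_{(e,f_1)}+\beta_{(e,f_2)}<\pi$, while each individual angle is in $(0,\pi)$. Thus every edge contributes strictly less than $2\pi$, giving $\sum_{f_i\in F'}\hat K_i<2|E'|\pi$. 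For case (2), positivity of $\hat K$ is immediate since each $\beta_{(e,f)}$ is a strictly positive horocyclic arc length.

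The closedness step (b) is the main obstacle. Suppose $\hat K^{(n)}=Q(r^{(n)})\to\hat K^\infty\in\Omega$; if $\{r^{(n)}\}$ admits no convergent subsequence in $\mathbb{R}^{|F|}$, then after passing to a subsequence a nonempty set of radii escapes to $\pm\infty$. The crux is to derive precise asymptotics of $\beta_{(e,f)}$ as $r$ reaches the boundary of $\mathbb{R}^{|F|}$, from the hyperbolic trigonometric identities attached to each triangle type $(1,1,0)$, $(0,0,1)$, $(0,0,0)$, $(0,0,-1)$. In case (2), I expect any single escaping coordinate to force the corresponding $K_i(r^{(n)})$ to tend to $0$ (as $r^{(n)}_i\to+\infty$, the horocycle at $f_i^\ast$ shrinks) or to $+\infty$ (as $r^{(n)}_i\to-\infty$), either of which contradicts $\hat K^\infty\in\mathbb{R}^{|F|}_{>0}$. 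In case (1), choosing $F'$ to be an appropriate subset of the indices with escaping radii, the angle asymptotics at each edge incident to $F'$ should force the per-edge contribution $2\beta_{(e,f_1)}+2\beta_{(e,f_2)}$ or $2\beta_{(e,f)}$ to attain its Gauss-Bonnet supremum $2\pi$ in the limit. Consequently $\sum_{f_i\in F'}\hat K^\infty_i=2|E'|\pi$, contradicting the strict inequality defining $\Omega$.

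Once closedness is established, $\mathrm{Im}(Q)=\Omega$, completing the proof. The technical heart of the argument, and the chief obstacle, is the case-by-case boundary asymptotic analysis of $\beta_{(e,f)}$: the explicit formulas from Guo-Luo's construction must be differentiated by triangle type, and their limiting behavior computed carefully so that the obstruction precisely matches the constraint $\sum_{f_i\in F'}\hat K_i<2|E'|\pi$ in case (1) and disappears entirely in case (2). A subsidiary care is required to ensure simultaneous degenerations on multiple coordinates do not cancel contributions at shared edges; choosing $F'$ to consist exactly of the indices escaping in the same direction handles this bookkeeping.
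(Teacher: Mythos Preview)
Your approach is essentially the paper's: the continuity method, using Guo--Luo's embedding theorem to get openness of $\mathrm{Im}(Q)$ in $\Omega$, and boundary asymptotics of the $\beta_{(e,f)}$ to get closedness. Two refinements are worth flagging.

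First, in case (2) the paper finds something sharper than asymptotics. After the substitution $u_i=-2e^{-r_i}$, the cosine/sine laws for $(0,0,\delta)$ triangles collapse to the exact identity $\beta_{(e,f_i)}=-\tfrac{1}{2}s(\theta(e))\,u_i$, where $s(\theta)$ equals $\cot\tfrac{\theta}{2}$, $\coth\tfrac{\theta}{2}$, or $\theta^{-1}$ according as $\delta=1,-1,0$. Hence $K_i(u)=-\bigl(\sum_{e<f_i}s(\theta(e))\bigr)u_i$ is \emph{linear} in $u_i$ alone, and $Q$ is manifestly a diffeomorphism $(-\infty,0)^{|F|}\to\mathbb{R}^{|F|}_{>0}$. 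Your boundary-analysis plan would succeed, but the paper's argument is a single line.

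Second, in case (1) your closedness sketch only treats one escape direction explicitly. When $r_i\to-\infty$ (equivalently $u_i\to-\infty$), taking $F'=\{f_i:u_i\to-\infty\}$ does make every edge of $E'$ contribute exactly $2\pi$ in the limit (the paper verifies this via $\cot\beta_i=\frac{-4\theta^2-u_j^2+u_i^2}{4\theta u_i}$ and a careful computation of $\cot(\beta_1+\beta_2)$ for the edges internal to $F'$), yielding $\sum_{F'}K_i\to 2|E'|\pi$. But when $r_i\to+\infty$ (equivalently $u_i\to 0$), the angles $\beta_{(e,f_i)}\to 0$ and hence $K_i\to 0$; the contradiction is with the positivity constraint $\hat K_i>0$, not with the sum inequality. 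The paper handles these two boundary strata separately, and your write-up should as well rather than folding both into a single ``per-edge contribution reaches $2\pi$'' claim.
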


\subsection{Combinatorial curvature flows for the generalized circle patterns}	
The combinatorial Ricci flow was first introduced by Chow-Luo \cite{C-L} to find Thurston's circle pattern with prescribed discrete Gaussian curvatures.
Subsequently, Guo-Luo \cite{G-L} introduced a discrete curvature flow for generalized circle patterns in the case that $(\epsilon,\epsilon,\delta)\in \{1,0,-1\}$ and obtained that the flow is a negative gradient flow of a strictly concave down function. The combinatorial Calabi flow was first introduced by Ge \cite{G} for Thurston's circle patterns on closed surfaces. In order to find a generalized circle pattern with prescribed generalized combinatorial curvature, we introduce the following combinatorial Ricci flow and combinatorial Calabi flow for the generalized circle patterns on a cellular decomposed surface with $\epsilon=1,\delta=0$ and $\epsilon=0,\delta \in \{-1,0,1\}$.

\begin{definition}\label{defn of comb flows}
Under the same assumptions as in Theorem \ref{thm of existence},
the combinatorial Ricci flow for the generalized circle patterns of type $(\epsilon,\epsilon,\delta)$ is defined to be
\begin{equation}\label{Eq: CRF}
\frac{\mathrm{d}r_i}{\mathrm{d}t}
=(K_i-\hat{K}_i)(\frac{1}{2}e^{r_i}-\frac{1}{2}\epsilon\delta e^{-r_i})
, \quad \forall i\in \{1,...,|F|\},
\end{equation}
and the combinatorial Calabi flow for the generalized circle patterns of type $(\epsilon,\epsilon,\delta)$ is defined to be
\begin{equation}\label{Eq: CCF}
\frac{\mathrm{d}r_{i}}{\mathrm{d}t}
=-\Delta(K-\hat{K})_i (\frac{1}{2}e^{r_i}-\frac{1}{2}\epsilon\delta e^{-r_i})
, \quad \forall i\in \{1,...,|F|\},
\end{equation}
where $\Delta=(\frac{\partial K_i}{\partial u_j})_{|F|\times |F|}$ is the discrete Laplace operator and $\delta=1,0,-1$.
\end{definition}

Note that the combinatorial Ricci flow (\ref{Eq: CRF}) introduced in Definition \ref{defn of comb flows}
is a modification of the discrete curvature flow
introduced by Guo-Luo for generalized circle patterns on surfaces in \cite{G-L}.
In this paper, we prove the following theorem for the combinatorial Ricci flow (\ref{Eq: CRF}) and the combinatorial Calabi flow (\ref{Eq: CCF}).


\begin{theorem}\label{flow}
Under the same assumptions as in Theorem \ref{thm of existence} (1),
the solutions of the combinatorial Ricci flow (\ref{Eq: CRF}) and the combinatorial Calabi flow (\ref{Eq: CCF}) for the $(1,1,0)$ type
generalized circle patterns exist for all time and converge exponentially fast for any initial data if and only if \begin{equation*}
	\sum_{f_i\in F^\prime} \hat{K_i}
	<2 |E^\prime| \pi.
\end{equation*}
Here $F^\prime$ is any nonempty subset of $F$ and $E^\prime$ is the set of all edges incident to any face in $F^\prime$.
\end{theorem}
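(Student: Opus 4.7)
The necessity is immediate from Theorem \ref{thm of existence} (1): if the flow converges to some $r^{\infty}$ for a given initial datum, continuity yields $K(r^{\infty})=\hat K$, and this forces the combinatorial inequality. For the sufficiency, I would introduce the change of variable $u_i=-2e^{-r_i}\in(-\infty,0)$. Since $du_i/dr_i=2e^{-r_i}$ is exactly the reciprocal of the factor $\tfrac12 e^{r_i}$ appearing in (\ref{Eq: CRF})--(\ref{Eq: CCF}) when $\epsilon\delta=0$, the combinatorial Ricci flow becomes $\dot u_i=K_i-\hat K_i$ and the combinatorial Calabi flow becomes $\dot u_i=-\sum_j(\partial K_i/\partial u_j)(K_j-\hat K_j)$. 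The rigidity Theorem \ref{Thm: GL} implies that the Jacobian $\Lambda=(\partial K_i/\partial u_j)$ is symmetric and negative definite on $(-\infty,0)^{|F|}$, so the Ricci flow is the negative gradient flow of the strictly convex Ricci energy
\[
\mathcal F(u)=\int_{u^{0}}^{u}\sum_i(\hat K_i-K_i)\,du_i,
\]
and the Calabi flow is the negative gradient flow of the Calabi energy $\mathcal C(u)=\tfrac12\sum_i(K_i-\hat K_i)^2$. Under the combinatorial inequality, Theorem \ref{thm of existence} (1) produces a necessarily unique critical point $u^{\ast}$ of $\mathcal F$, which is its global minimum.

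The crucial analytic step is showing that $\mathcal F$ is proper on $(-\infty,0)^{|F|}$. I would prove this by analyzing the asymptotics of $K_i$ as $u$ approaches the boundary of its domain: as $u_i\to 0^-$ (i.e.\ $r_i\to+\infty$) along indices in some nonempty subset $F'\subseteq F$, the horocyclic geometry of a $(1,1,0)$ triangle forces $\sum_{f_i\in F'} K_i\to 2|E'|\pi$, while as $u_i\to-\infty$ one has $K_i\to 0$ for those indices. Combining these boundary asymptotics with the strict inequality $\sum_{f_i\in F'}\hat K_i<2|E'|\pi$ (and its dual analogue) forces $\mathcal F(u)\to+\infty$ along every sequence exiting all compact subsets of $(-\infty,0)^{|F|}$, giving coercivity. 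Together with strict convexity, this makes every sublevel set of $\mathcal F$ a compact convex neighborhood of $u^{\ast}$.

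With properness and strict convexity in hand, both flows are handled by standard gradient-flow arguments. Along the Ricci flow, $\frac{d}{dt}\mathcal F=-\|K-\hat K\|^2\leq 0$, so $u(t)$ is confined to the compact sublevel set $\{\mathcal F\leq\mathcal F(u(0))\}$, yielding longtime existence; a LaSalle invariance argument together with uniqueness of the critical point gives $u(t)\to u^{\ast}$. Along the Calabi flow, a direct computation yields $\frac{d}{dt}\mathcal F=(K-\hat K)^{\top}\Lambda(K-\hat K)\leq 0$ by the negative definiteness of $\Lambda$, so $\mathcal F$ remains a Lyapunov function; longtime existence follows identically, and $u(t)\to u^{\ast}$ follows by applying LaSalle to $\mathcal C$, since $\nabla\mathcal C=\Lambda(K-\hat K)=0$ forces $K=\hat K$. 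Exponential convergence in both cases is obtained by linearizing at $u^{\ast}$: the Ricci flow linearizes to $\dot v=\Lambda(u^{\ast})v$ and the Calabi flow to $\dot v=-\Lambda(u^{\ast})^2 v$, both exponentially contracting. The main obstacle is the properness of $\mathcal F$, for which the boundary asymptotics of the generalized angles in a $(1,1,0)$ triangle must be carefully controlled; everything else is essentially structural.
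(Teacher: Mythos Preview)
Your overall architecture is right, and the change of variables, the identification of the two flows as negative gradient flows, and the linearization for exponential convergence all match the paper. However, the heart of your argument---properness of $\mathcal F$ on $(-\infty,0)^{|F|}$---does not go through, and the stated boundary asymptotics are in fact reversed.

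From the cosine law (see the formula $\cot\beta_1=\dfrac{-4\theta^2-u_2^2+u_1^2}{4\theta u_1}$ in the paper), one checks that as $u_i\to 0^-$ (i.e.\ $r_i\to+\infty$) the angle $\beta_{(e,f_i)}\to 0$, hence $K_i\to 0$; whereas as $u_i\to-\infty$ for the indices in some $F'$, the sum $\sum_{f_i\in F'}K_i\to 2|E'|\pi$. This is the opposite of what you wrote. More importantly, because the boundary piece $\{u_i=0\}$ lies at a \emph{finite} point and $\hat K_i-K_i$ stays bounded there, the integral defining $\mathcal F$ remains finite as $u_i\to 0^-$. Thus $\mathcal F$ is \emph{not} coercive on $(-\infty,0)^{|F|}$: sublevel sets are bounded in $\mathbb R^{|F|}$ (this is what the paper uses) but can touch the finite boundary $\{u_i=0\}$. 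Your LaSalle argument therefore does not, by itself, prevent the solution from running into that boundary in finite or infinite time.

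The paper closes this gap by a barrier argument rather than properness. For the Ricci flow: if $u_i(t)\to 0$, then eventually $K_i<\hat K_i$, so $\dot u_i=K_i-\hat K_i<0$, contradicting $u_i\to 0^-$. For the Calabi flow the barrier is more delicate: one needs the structural identity $\partial\beta_{(e,f_i)}/\partial u_i=-\cosh l_{ij}\,\partial\beta_{(e,f_i)}/\partial u_j$ together with $\cosh l_{ij}\to+\infty$ uniformly as $u_i\to 0^-$, to show that $\dot u_i=-\Delta(K-\hat K)_i<0$ once $u_i$ is close enough to $0$. You will need to supply an argument of this kind; the coercivity route, as stated, fails at the finite boundary.
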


\begin{theorem}\label{flow2}
Under the same assumptions as in Theorem \ref{thm of existence} (2),
the solutions of the combinatorial Ricci flow (\ref{Eq: CRF}) and the combinatorial Calabi flow (\ref{Eq: CCF})
for the $(0,0,\delta)$ type
generalized circle patterns exist for all time and converge exponentially fast for any initial data if and only if  $\hat{K}\in \mathbb{R}_{>0}^{|F|}$.
\end{theorem}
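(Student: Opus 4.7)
The plan is to reduce both flows to standard gradient-type evolutions by introducing an auxiliary coordinate, and then to exploit the strictly definite variational structure from \cite{G-L} (Theorem \ref{Thm: GL}) together with the existence part of Theorem \ref{thm of existence}(2). Since $\epsilon\delta=0$ in the $(0,0,\delta)$-type case, I would set $u_i=-2e^{-r_i}$, a diffeomorphism from $\mathbb{R}^{|F|}$ onto $(-\infty,0)^{|F|}$ with $du_i=2e^{-r_i}\,dr_i$. A direct computation turns the Ricci flow (\ref{Eq: CRF}) into $\dot u_i=K_i-\hat K_i$ and the Calabi flow (\ref{Eq: CCF}) into $\dot u_i=-\Delta(K-\hat K)_i$, where $\Delta=(\partial K_i/\partial u_j)$. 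By Guo-Luo's proof of Theorem \ref{Thm: GL}, this matrix is symmetric and strictly definite, so (after fixing sign conventions) there is a strictly convex smooth function $E$ on $(-\infty,0)^{|F|}$ with $\nabla E=K-\hat K$. In these coordinates the Ricci flow is the negative gradient flow of $E$, while the Calabi flow is the negative gradient flow of $\tfrac12\|K-\hat K\|^2$ with respect to the Riemannian metric $\Delta^{-1}$.

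Necessity of $\hat K\in\mathbb{R}_{>0}^{|F|}$ is immediate: every generalized angle $\beta_{(e,f)}$ is strictly positive, so $K_i(u)>0$ along any solution, and passing to the limit gives $\hat K_i>0$. For sufficiency, fix $\hat K\in\mathbb{R}_{>0}^{|F|}$, let $\bar r$ be the unique zero of $K-\hat K$ furnished by Theorem \ref{thm of existence}(2), and let $\bar u$ be the corresponding critical point of $E$. Combining strict definiteness of $\Delta$ with the surjectivity of $Q$ onto $\mathbb{R}_{>0}^{|F|}$ (the content of Theorem \ref{thm of existence}(2)), I would show that $E$ is proper on $(-\infty,0)^{|F|}$ with unique minimum at $\bar u$. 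The standard Lyapunov argument then yields global existence for both flows: the solution stays in a sublevel set of the monotone energy, hence in a compact subset of $(-\infty,0)^{|F|}$, and its $\omega$-limit set must be contained in the unique critical locus $\{\bar u\}$. Exponential convergence follows by linearizing at $\bar u$: with $\tilde u = u-\bar u$, the linearized Ricci flow is $\dot{\tilde u}=-\Delta(\bar u)\tilde u$ and the linearized Calabi flow is $\dot{\tilde u}=-\Delta(\bar u)^2\tilde u$, and positive definiteness of $\Delta(\bar u)$ gives decay rates $\lambda_{\min}(\Delta(\bar u))$ and $\lambda_{\min}(\Delta(\bar u))^2$, respectively.

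The main obstacle will be establishing properness of $E$ on $(-\infty,0)^{|F|}$, which amounts to controlling $K$ as $u$ approaches the boundary: some $u_i\to 0^-$ corresponds to $r_i\to+\infty$, while some $u_i\to-\infty$ corresponds to $r_i\to-\infty$, and the geometric degeneration of the underlying $(0,0,\delta)$-type generalized hyperbolic triangle differs in each of the three cases $\delta\in\{-1,0,1\}$ (horocyclic angle collapse, cusp splitting, or flare opening). The right bookkeeping must therefore be carried out case by case; however, the surjectivity statement in Theorem \ref{thm of existence}(2) already encapsulates exactly the boundary asymptotics needed, so I would draw the required estimates from there rather than redoing the degeneration analysis from scratch. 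Once properness is in hand, the remaining steps above are routine dynamical-systems considerations.
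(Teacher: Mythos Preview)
Your reduction to the $u$-coordinates, the gradient-flow interpretation, and the linearization at the equilibrium all match the paper. The genuine gap is the properness claim: $E$ is \emph{not} proper on $(-\infty,0)^{|F|}$, and the surjectivity in Theorem~\ref{thm of existence}(2) cannot manufacture it. In the $(0,0,\delta)$ case the cosine/sine laws give the explicit formula $\beta_{(e,f_i)}=-\tfrac12\,s(\theta(e))\,u_i$ with $s(\theta)>0$ (equal to $\cot\tfrac{\theta}{2}$, $\coth\tfrac{\theta}{2}$, or $\theta^{-1}$ according as $\delta=1,-1,0$). Hence $K_i=-c_iu_i$ with $c_i=\sum_{e<f_i}s(\theta(e))>0$, and $E(u)=\sum_i\bigl(\tfrac{c_i}{2}u_i^2+\hat K_iu_i\bigr)$ is a positive-definite quadratic that extends smoothly across $u_i=0$; for large levels its sublevel ellipsoid crosses the coordinate hyperplanes, so the sublevel set in the open orthant is not closed. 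Equivalently, as $u_i\to 0^-$ one has $\partial_iE=\hat K_i-K_i\to\hat K_i$, a finite nonzero limit, which directly precludes coercivity at that face. So the Lyapunov argument alone does not trap the solution inside $(-\infty,0)^{|F|}$, and your ``standard'' step would fail exactly for initial data with large energy.

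The paper separates the two boundary pieces. The monotone energy is bounded below (strict convexity plus the critical point $\bar u$), which keeps $u(t)$ in a bounded subset of $\mathbb{R}^{|F|}$ and rules out $u_i\to-\infty$. The face $u_i\to 0^-$ is handled by a barrier: since $K_i\to 0<\hat K_i$ there, along the Ricci flow $\dot u_i=K_i-\hat K_i<0$ once $u_i$ is near $0$, so $u_i$ cannot increase to $0$; for the Calabi flow, because $\partial\beta_{(e,f_i)}/\partial u_j=0$ for $j\ne i$ in this case, one computes $\dot u_i=c_i(K_i-\hat K_i)$ and the same barrier applies. This barrier step is precisely what is missing from your outline. (A minor sign remark: $\Delta=\partial K/\partial u$ is negative definite here, so your linearized systems should read $\dot{\tilde u}=\Delta(\bar u)\tilde u$ and $\dot{\tilde u}=-\Delta(\bar u)^2\tilde u$; the exponential rates are then governed by $|\lambda_{\max}(\Delta(\bar u))|$ and its square.)
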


In \cite{Hazel}, Hazel considered the combinatorial Ricci flow in the cases of $(1, 1, 1)$,  $(1, 1, 0)$ and $(1, 1, -1)$, and proved the longtime existence and convergence of the solutions for the combinatorial Ricci flow with
$\hat{K}=(2\pi, ..., 2\pi)$. Theorem \ref{flow} generalizes Hazel's result for $(1, 1, 0)$ type generalized circle patterns with $\hat{K}=(2\pi, ..., 2\pi)$ to $(1, 1, 0)$ type generalized circle patterns with any prescribed combinatorial curvature.

\subsection{Organization of the paper}
In Section \ref{Generalized hyperbolic circle patterns in the case of (1,1,0)},
we study the generalized circle patterns in the case of $(1, 1, 0)$ and prove Theorem \ref{thm of existence} (1).
In Section \ref{Prescribed curvature flows in the case of (1,1,0)}, we prove Theorem \ref{flow}. In Section \ref{Generalized hyperbolic circle patterns in the case of (0,0,delta)},
we study the generalized circle patterns in the cases of $(0, 0, 1)$, $(0, 0, -1)$ and $(0, 0, 0)$ and prove Theorem \ref{thm of existence} (2). In Section \ref{Prescribed curvature flows in the case of (0,0,delta)}, we prove Theorem \ref{flow2}.
\\
\\
\textbf{Acknowledgements}\\[8pt]
The research of the authors is supported by National Natural Science Foundation of China
under grant no. 12471057.

\section{Generalized hyperbolic circle patterns in the case of (1, 1, 0)}\label{Generalized hyperbolic circle patterns in the case of (1,1,0)}

\subsection{The admissible space of generalized circle patterns in the case of (1, 1, 0)}\label{110}
In this section, we will characterize the existence of a $(1, 1, 0)$ type generalized hyperbolic triangle with two prescribed generalized edge lengths and the prescribed generalized angle between the two edges.

According to Lemma 3.6 in \cite{G-L}, given $\theta\in (0,+\infty)$, for any $r_{1}$, $r_{2}\in \mathbb{R}$,
there exists a $(1, 1, 0)$ type generalized hyperbolic triangle with two edges of generalized lengths $r_1$ and $r_2$ so that the generalized angle between them is $\theta$ of type $0$. Denote the generalized length of the edge opposite to $\theta$ by $l_{12}$ and the generalized angles opposite to $ r_{2}, r_{1}$ by $\beta_{1},\beta_{2}$, respectively.
Please refer to Figure \ref{Figure 1} (left).
Guo-Luo (\cite{G-L} Appendix A) gave the following formulas of cosine and sine laws for the $(1, 1, 0)$ type generalized hyperbolic triangle:
\begin{equation}\label{110cosbeta1}
\cos\beta_1=\frac{-e^{r_{2}}+e^{r_1}\cosh l_{12}}{e^{r_1}\sinh l_{12}},
\end{equation}
\begin{equation*}
\cos\beta_2=\frac{-e^{r_{1}}+e^{r_2}\cosh l_{12}}{e^{r_2}\sinh l_{12}},
\end{equation*}
\begin{equation}\label{110chl12}
	\cosh l_{12}=\frac{\theta^2}{2}e^{r_1+r_2}+\cosh(r_1-r_2),
\end{equation}
\begin{equation}\label{110shl12}
	\sinh l_{12}=\frac{\theta e^{r_{2}}}{\sin\beta_{1}}=\frac{\theta e^{r_{1}}}{\sin\beta_{2}}.
\end{equation}

\begin{figure}[htbp]
	\centering
	\includegraphics[scale=1.4]{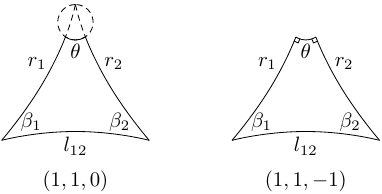}
	\caption{Generalized hyperbolic triangles for $\epsilon=1$, $\delta\in \{0,-1\}$}
	\label{Figure 1}
\end{figure}

By a change of variables
$$u_{i}=-2e^{-r_i},$$
the admissible space of a (1, 1, 0) type generalized hyperbolic quadrilateral in the parameter $u$ is $U_{e}= (-\infty,0)^{2}$.
Suppose $U$ is the admissible space of generalized circle patterns in the case of (1, 1, 0).
As a result, $U=\cap_{e\in E}U_{e}=(-\infty,0)^{|F|}$ is a convex polyhedron.
Furthermore, Guo-Luo proved the following result.

\begin{lemma}(\cite{G-L})\label{Lem: matrix negative 2}
	For a fixed generalized angle $\theta\in (0,+\infty)$,
	the matrix $\frac{\partial (\beta_1, \beta_2)}{\partial (u_1, u_2)}$ is symmetric and negative definite on $U_{e}$.
	As a result, the matrix $\frac{\partial (K_1,...,  K_{|F|})}{\partial(u_1,...,u_{|F|})}$ is symmetric and negative definite on $U$.
\end{lemma}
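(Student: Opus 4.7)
The plan is to prove the $2\times 2$ local statement for a single generalized hyperbolic quadrilateral on $U_e$, and then deduce the global negative definiteness on $U$ by summing the local blocks over edges.

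For the local symmetry, the strategy I would pursue is to exhibit a primitive $F_e: U_e \to \mathbb{R}$ with $\partial F_e/\partial u_i = \beta_i$, so that $\frac{\partial(\beta_1,\beta_2)}{\partial(u_1,u_2)}$ is the Hessian of $F_e$ and is symmetric automatically. Existence of such a primitive amounts to closedness of the $1$-form $\omega = \beta_1\,du_1 + \beta_2\,du_2$. To verify closedness I would differentiate (\ref{110shl12}) logarithmically to obtain
\[
\cot\beta_1\, d\beta_1 + \coth l_{12}\, dl_{12} = dr_2, \qquad \cot\beta_2\, d\beta_2 + \coth l_{12}\, dl_{12} = dr_1,
\]
and differentiate (\ref{110chl12}) to express $\sinh l_{12}\, dl_{12}$ as an explicit linear combination of $dr_1$ and $dr_2$. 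Solving the resulting linear system for $(d\beta_1, d\beta_2)$ and converting to $u$-coordinates via $du_i = 2e^{-r_i}\,dr_i$ should produce a symmetric coefficient matrix.

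For negative definiteness, I would compute the two diagonal entries and the determinant directly from the linear system above, and check that on $U_e = (-\infty,0)^2$ the diagonal is strictly negative and the determinant is strictly positive. A more conceptual alternative is to identify $F_e$ (up to boundary terms linear in $u$) with twice the hyperbolic area of the generalized triangle and invoke a Penner--Bobenko--Pinkall--Springborn-style strict concavity principle; either route still requires verifying the signs uniformly throughout $U_e$.

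For the global matrix, since $K_f = \sum_{e<f} 2\beta_{(e,f)}$, the Jacobian $\frac{\partial(K_1,\dots,K_{|F|})}{\partial(u_1,\dots,u_{|F|})}$ is a sum over edges $e = f_1\cap f_2$ of $2 M_e$, where $M_e = \frac{\partial(\beta_1,\beta_2)}{\partial(u_1,u_2)}$ is embedded into the rows and columns indexed by $f_1,f_2$ and zero elsewhere. Symmetry is preserved by this sum, and
\[
v^{T}\!\left(\tfrac{\partial K}{\partial u}\right)\!v \;=\; \sum_{e=f_1\cap f_2} 2\,(v_{f_1},v_{f_2})\,M_e\,(v_{f_1},v_{f_2})^{T} \;\leq\; 0
\]
for every $v \in \mathbb{R}^{|F|}$, with equality forcing $v_{f_1} = v_{f_2} = 0$ for every edge, hence $v=0$ since every face is incident to at least one edge. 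The main obstacle is the bookkeeping in the local step: the derivatives couple $r_i, l_{12}, \beta_i$ through (\ref{110cosbeta1})-(\ref{110shl12}) and must be simplified carefully to expose symmetry and strict definiteness simultaneously. Producing an explicit closed-form primitive $F_e$ of Milnor--Lobachevsky type would make both properties transparent but is technically the most delicate component.
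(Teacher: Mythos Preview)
The paper does not supply its own proof of this lemma; it is quoted from Guo--Luo \cite{G-L}. That said, the ingredients for a direct proof appear later in the paper, in Lemma~\ref{betaibetaj}: equations (\ref{equ:u3}) and (\ref{equ:u4}) give
\[
\frac{\partial\beta_1}{\partial u_2}=\frac{\theta e^{r_1+r_2}}{2\sinh^2 l_{12}},\qquad
\frac{\partial\beta_1}{\partial u_1}=-\cosh l_{12}\,\frac{\partial\beta_1}{\partial u_2},
\]
and by the $1\leftrightarrow 2$ symmetry the same holds with indices swapped. Symmetry of the Jacobian is then immediate, the diagonal entries are manifestly negative, and the determinant equals $\bigl(\tfrac{\theta e^{r_1+r_2}}{2\sinh^2 l_{12}}\bigr)^2(\cosh^2 l_{12}-1)>0$.

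Your outline is correct in spirit, but more circuitous than necessary. Rather than differentiating the full sine/cosine system (\ref{110cosbeta1})--(\ref{110shl12}) and solving a linear system, it is cleaner to differentiate the single closed formula (\ref{110cotbeta1}) for $\cot\beta_i$, which isolates the dependence on $r_1,r_2$ explicitly; this is exactly what the paper does in the proof of Lemma~\ref{betaibetaj}. Your suggestion to interpret the primitive $F_e$ as (a multiple of) hyperbolic area and invoke a Bobenko--Springborn concavity principle is plausible but unnecessary here, and for the $(1,1,0)$ type the identification with area is not quite as clean as in the $(1,1,1)$ case, so the direct computation is preferable. Your passage from the local $2\times2$ statement to the global Jacobian by summing embedded blocks over edges is correct and is the standard argument.
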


Hence for each $e\in E$, the following potential function
\begin{equation*}
	\mathcal{E}_{e}(u_{1},u_{2}) =\int^{(u_{1},u_{2})}(-\beta_1)\mathrm{d}u_{1}
	+(-\beta_2)\mathrm{d}u_{2}
\end{equation*}
is well defined on $\mathbb{R}^2_{<0}$.
By Lemma \ref{Lem: matrix negative 2},
$\mathcal{E}_{e}(u_{1},u_{2})$ is strictly convex on $U_{e}$ with $\nabla_{u_i}\mathcal{E}_{e}(u_{1},u_{2})=-\beta_i$ for $i=1,2$.
Furthermore, the function
\begin{equation*}
	\mathcal{E}(u)=\sum_{e=p\cap w\in E}2\mathcal{E}_{e}(u_{p}, u_{w})
\end{equation*}
is strictly convex on $U$ with $\nabla_{u_i}\mathcal{E}(u)=-\sum_{e;e<f_i}2\beta_{(e,f_i)}
=-K_i$ for $i\in\{1,...,|F|\}$.

\subsection{Limit behaviors of the generalized angles}
In this section, we consider the limit behavior of the generalized hyperbolic circle pattern in the case of $(1, 1, 0)$, in order to solve the existence problem.
\begin{lemma}\label{limit 2}
	Given $\theta\in (0,+\infty)$,
	for the $(1,1,0)$ type generalized hyperbolic triangle shown in Figure \ref{Figure 1} (left),
	\begin{description}
	\item[(1)] if $u_{i}$ converges to $0$ , then $\beta_{i}$ converges to $ 0$, where $i=1,2$;
	\item[(2)] if $u_{i}$ converges to $-\infty$ and $u_j$ converges to $t\in (-\infty,0)$, then $\beta_{i}$ converges to $\pi$ and $\beta_{j}$ converges to $0$, where $i=1,2$;
	\item[(3)] if both $u_{i}$ and $u_j$ converge to $-\infty$, then $\beta_{1}+\beta_{2}$ converges to $ \pi$, where $i=1,2$.
	\end{description}
\end{lemma}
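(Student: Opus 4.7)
The plan is to read the limits directly from the generalized cosine law (\ref{110cosbeta1}), the sine law (\ref{110shl12}), and the formula (\ref{110chl12}) for $\cosh l_{12}$. Recall that under the change of variables $u_k=-2e^{-r_k}$, the condition $u_k\to 0$ corresponds to $r_k\to+\infty$, while $u_k\to -\infty$ corresponds to $r_k\to -\infty$. Two algebraic facts are the workhorses of the proof. Adding the two cosine laws and substituting for $\cosh(r_1-r_2)$ via (\ref{110chl12}) yields the identity
\begin{equation*}
\cos\beta_1+\cos\beta_2=\frac{\theta^2 e^{r_1+r_2}}{\sinh l_{12}},
\end{equation*}
while factoring $\sinh^2 l_{12}=(\cosh l_{12}-1)(\cosh l_{12}+1)$ together with $\cosh l_{12}-1\ge \tfrac{\theta^2}{2}e^{r_1+r_2}$ and $\cosh l_{12}+1\ge 2$ gives the lower bound
\begin{equation*}
\sinh l_{12}\ge \theta\, e^{(r_1+r_2)/2}.
\end{equation*}

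For Case (1) I take $r_1\to +\infty$. The estimate $\cosh l_{12}\ge \tfrac{\theta^2}{2}e^{r_1+r_2}$ makes the numerator of $\cos\beta_1$ in (\ref{110cosbeta1}) eventually positive, so $\cos\beta_1>0$, and dividing (\ref{110shl12}) by (\ref{110cosbeta1}) gives
\begin{equation*}
\tan\beta_1=\frac{\theta e^{r_1+r_2}}{e^{r_1}\cosh l_{12}-e^{r_2}}\le \frac{\theta e^{r_1}}{\tfrac{\theta^2}{2}e^{2r_1}-1}\longrightarrow 0,
\end{equation*}
forcing $\beta_1\to 0$. For Case (2), with $r_1\to -\infty$ and $r_2\to r_2^{*}\in\mathbb{R}$, the term $\cosh(r_1-r_2)\sim \tfrac12 e^{r_2^{*}-r_1}$ dominates in (\ref{110chl12}), so $\cosh l_{12}\sim\sinh l_{12}\sim\tfrac12 e^{r_2^{*}-r_1}$. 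Substituting these asymptotics into (\ref{110cosbeta1}) and its analogue for $\beta_2$ gives $\cos\beta_1\to -1$ and $\cos\beta_2\to 1$, hence $\beta_1\to\pi$ and $\beta_2\to 0$.

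Case (3), with both $r_1,r_2\to -\infty$, is the most delicate; here the identity and bound from the first paragraph combine uniformly:
\begin{equation*}
0\le \cos\beta_1+\cos\beta_2=\frac{\theta^2 e^{r_1+r_2}}{\sinh l_{12}}\le \theta\, e^{(r_1+r_2)/2}\longrightarrow 0.
\end{equation*}
Since $\beta_1,\beta_2\in(0,\pi)$, from any subsequence one can extract a further sub-subsequence along which $\beta_i\to\beta_i^{*}\in[0,\pi]$; by continuity $\cos\beta_1^{*}+\cos\beta_2^{*}=0$, which for $\beta_i^{*}\in[0,\pi]$ is equivalent to $\beta_1^{*}+\beta_2^{*}=\pi$, so the whole sequence satisfies $\beta_1+\beta_2\to \pi$. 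The main obstacle is exactly this case: $r_1-r_2$ can stay bounded or tend to $\pm\infty$ at many different rates, and a rate-by-rate analysis of each angle separately would require a tedious case split, whereas the algebraic identity above collapses all of these into a single inequality.
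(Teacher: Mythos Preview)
Your proof is correct, but it proceeds along a different path from the paper's. The paper first substitutes (\ref{110chl12}) and (\ref{110shl12}) into (\ref{110cosbeta1}) to obtain closed formulas
\[
\cot\beta_1=\frac{-4\theta^2-u_2^2+u_1^2}{4\theta u_1},\qquad
\cot\beta_2=\frac{-4\theta^2-u_1^2+u_2^2}{4\theta u_2},
\]
and reads off Cases (1) and (2) from these. For Case~(3) it then computes $\cot(\beta_1+\beta_2)$ explicitly in $(u_1,u_2)$ and shows it tends to $-\infty$ by squeezing a certain weighted average between $u_1+u_2$ and $16u_1u_2/(u_1+u_2)$, both of which go to $-\infty$. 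Your argument stays in the $r$-variables and instead isolates the identity $\cos\beta_1+\cos\beta_2=\theta^2 e^{r_1+r_2}/\sinh l_{12}$ together with the bound $\sinh l_{12}\ge \theta\,e^{(r_1+r_2)/2}$; this collapses Case~(3) into a single inequality and finishes via the injectivity of $\cos$ on $[0,\pi]$, which is arguably cleaner and sidesteps the delicate bookkeeping the paper needs for $\cot(\beta_1+\beta_2)$. The trade-off is that the paper's explicit $\cot\beta_i$ formulas in the $u$-coordinates are immediately reusable in the derivative computations of Lemma~\ref{betaibetaj} and in the flow analysis, whereas your identities are tailored to the limit question and would have to be supplemented later.
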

\proof
\textbf{(1):}
Submitting (\ref{110chl12}) and the first equation of (\ref{110shl12}) into (\ref{110cosbeta1}) gives
\begin{equation}\label{110cotbeta1}
	\cot\beta_{1}
	=\frac{\theta^2+e^{-2r_2}-e^{-2r_1}}{2\theta e^{-r_1}} .
\end{equation}
Since $u_{i}=-2e^{-r_i}$, we have
\begin{equation}\label{110beta1}
\cot\beta_{1}
=\frac{-4\theta^2-u_2^2+u_1^2}{4\theta u_1}.
\end{equation}
Similarly, we have
\begin{equation}\label{110beta2}
\cot\beta_{2}
=\frac{-4\theta^2-u_1^2+u_2^2}{4\theta u_2}.
\end{equation}
Let $\{i,j\}=\{1,2\}$.
If $u_{i}$ converges to $0$, then
$$\lim_{u_{i} \to 0^-}\cot\beta_{i}=\lim_{u_{i} \to 0^-}\frac{-4\theta^2-u_j^2+u_i^2}{4\theta u_i}=\lim_{u_{i} \to 0^-}\frac{4\theta^2+u_j^2}{-4\theta u_i}=+\infty,$$
which implies that $\beta_{i}$ converges to 0.

\textbf{(2):}
If $u_{i}$ converges to $-\infty$ and $u_j$ converges to $t\in (-\infty,0)$, then
$$\lim_{(u_{i},u_j) \to (-\infty,t)}\cot\beta_{i} =\lim_{(u_{i},u_j) \to (-\infty,t)}\frac{-4\theta^2-u_j^2+u_i^2}{4\theta u_i}
=-\infty$$
and		
$$\lim_{(u_{i},u_j) \to (-\infty,t)}\cot\beta_{j} =\lim_{(u_{i},u_j) \to (-\infty,t)}\frac{-4\theta^2-u_i^2+u_j^2}{4\theta u_j}
=+\infty,$$		
which implies that $\beta_{i}$ converges to $\pi$ and $\beta_{j}$ converges to 0.

\textbf{(3):}
Submitting (\ref{110beta1}) and (\ref{110beta2}) into $\cot(\beta_1+\beta_2)=\frac{\cot \beta_1\cot \beta_2-1}{\cot \beta_1+\cot \beta_2}$ gives
\begin{equation*}
\cot(\beta_1+\beta_2)=\frac{-\theta^4+(u_1+u_2)^2(u_1-u_2)^2+16\theta^2u_1u_2}{4\theta (u_1+u_2)(\theta^2+(u_1-u_2)^2)}.
\end{equation*}
If both $u_{i}$ and $u_{j}$ converge to $-\infty$, then
\begin{equation*}
\begin{aligned}
&\lim_{(u_1,u_2)\to (-\infty,-\infty)}\cot(\beta_1+\beta_2)\\
=&\lim_{(u_1,u_2)\to (-\infty,-\infty)}\frac{1}{4\theta}[(u_1+u_2)\frac{(u_1-u_2)^2}{\theta^2+(u_1-u_2)^2}+(\frac{16u_1u_2}{u_1+u_2})\frac{\theta^2}{\theta^2+(u_1-u_2)^2}]. \end{aligned}
\end{equation*}
Note that \begin{equation*}
\begin{aligned}
&\min\{u_1+u_2,\frac{16u_1u_2}{u_1+u_2}\}\\
\le&(u_1+u_2)\frac{(u_1-u_2)^2}{\theta^2+(u_1-u_2)^2}+(\frac{16u_1u_2}{u_1+u_2})\frac{\theta^2}{\theta^2+(u_1-u_2)^2}\\
\le& \max\{u_1+u_2,\frac{16u_1u_2}{u_1+u_2}\}.
\end{aligned}
\end{equation*}
Since $$\lim_{(u_1,u_2)\to (-\infty,-\infty)}u_1+u_2=-\infty$$ and $$\lim_{(u_1,u_2)\to (-\infty,-\infty)}\frac{16u_1u_2}{u_1+u_2}=\lim_{(u_1,u_2)\to (-\infty,-\infty)}\frac{16}{\frac{1}{u_1}+\frac{1}{u_2}}=-\infty,$$
we have \[ \lim_{(u_1,u_2)\to (-\infty,-\infty)}(u_1+u_2)\frac{(u_1-u_2)^2}{\theta^2+(u_1-u_2)^2}+(\frac{16u_1u_2}{u_1+u_2})\frac{\theta^2}{\theta^2+(u_1-u_2)^2}=-\infty. \]
Therefore, $$\lim_{(u_1,u_2)\to (-\infty,-\infty)}\cot(\beta_1+\beta_2)=-\infty,$$
which implies that $\beta_{1}+\beta_{2}$ converges to $ \pi$ as $\beta_1$ and $\beta_2$ both converge to $-\infty$.
\qed

We further have the following variational formula for the generalized angles in a $(1,1,0)$ type generalized hyperbolic triangle. Such variational formulae were first obtained by Glickenstein-Thomas \cite{GT} for discrete conformal structures on closed surfaces.
\begin{lemma}\label{betaibetaj}
Given $\theta\in (0,+\infty)$,
for the $(1,1,0)$ type generalized hyperbolic triangle shown in Figure \ref{Figure 1} (left), we have \[ \frac{\partial \beta_{1}}{\partial u_1}=-\cosh l_{12}\frac{\partial \beta_{1}}{\partial u_2}, \quad \frac{\partial \beta_{2}}{\partial u_2}=-\cosh l_{12}\frac{\partial \beta_{2}}{\partial u_1}. \]
\end{lemma}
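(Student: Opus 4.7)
The plan is to verify both identities by direct computation from the explicit formulas for $\cot\beta_1$ and $\cot\beta_2$ already established in (\ref{110beta1}) and (\ref{110beta2}), comparing with the formula for $\cosh l_{12}$ rewritten in the $u$-coordinates. Since the two identities are symmetric under $1 \leftrightarrow 2$, it suffices to prove the first.

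First I would differentiate the expression
\[
\cot\beta_{1}=\frac{u_1^2-u_2^2-4\theta^2}{4\theta u_1}
\]
with respect to $u_1$ and $u_2$ separately. A short calculation gives
\[
\frac{\partial \cot\beta_1}{\partial u_1}=\frac{u_1^2+u_2^2+4\theta^2}{4\theta u_1^2},\qquad
\frac{\partial \cot\beta_1}{\partial u_2}=-\frac{u_2}{2\theta u_1}.
\]
Since $\partial \cot\beta_1/\partial u_i=-\csc^2\beta_1\cdot\partial\beta_1/\partial u_i$, the common factor cancels in the ratio, and I obtain
\[
\frac{\partial \beta_1/\partial u_1}{\partial \beta_1/\partial u_2}
=\frac{\partial \cot\beta_1/\partial u_1}{\partial \cot\beta_1/\partial u_2}
=-\frac{u_1^2+u_2^2+4\theta^2}{2u_1u_2}.
\]

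Next I would translate $\cosh l_{12}$ into the $u$-variables using $u_i=-2e^{-r_i}$, so that $e^{r_1+r_2}=4/(u_1u_2)$ and $e^{r_1-r_2}+e^{r_2-r_1}=(u_1^2+u_2^2)/(u_1u_2)$. Substituting into (\ref{110chl12}) yields
\[
\cosh l_{12}=\frac{2\theta^2}{u_1u_2}+\frac{u_1^2+u_2^2}{2u_1u_2}=\frac{u_1^2+u_2^2+4\theta^2}{2u_1u_2}.
\]
Comparing the two displays immediately gives $\partial\beta_1/\partial u_1=-\cosh l_{12}\cdot \partial\beta_1/\partial u_2$. Swapping the roles of the indices in the computation of $\cot\beta_2$ via (\ref{110beta2}) yields the second identity in exactly the same way.

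Since the argument is essentially a routine check, no serious obstacle arises; the only point requiring mild care is keeping track of signs given that $u_1,u_2<0$ on the admissible space $U_e$, which ensures $u_1u_2>0$ and hence the right-hand side expression for $\cosh l_{12}$ is positive, as it must be. One could alternatively obtain the same identities by implicit differentiation of the cosine law (\ref{110cosbeta1}) together with (\ref{110chl12}), but the direct route through $\cot\beta_i$ is the shortest.
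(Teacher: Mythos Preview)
Your proof is correct and follows essentially the same approach as the paper: both differentiate the explicit formula for $\cot\beta_1$ and compare with $\cosh l_{12}$. The only organizational difference is that the paper differentiates (\ref{110cotbeta1}) in the $r$-variables, simplifies via the sine law (\ref{110shl12}), and then converts to $u$, thereby obtaining the individual partials
\[
\frac{\partial\beta_1}{\partial u_1}=-\frac{\theta e^{r_1+r_2}\cosh l_{12}}{2\sinh^2 l_{12}},\qquad
\frac{\partial\beta_1}{\partial u_2}=\frac{\theta e^{r_1+r_2}}{2\sinh^2 l_{12}},
\]
whereas you work directly in the $u$-variables via (\ref{110beta1}) and compute only their ratio. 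Your route is slightly shorter for the lemma at hand; the paper's route has the side benefit that the explicit formula for $\partial\beta_1/\partial u_2$ is reused later in the proof of Theorem~\ref{Thm: CCF110}.
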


\begin{proof}
Differentiating (\ref{110cotbeta1}) in $r_1$ and submitting the first equation of (\ref{110shl12}) into it, we have
\begin{equation*}
	\frac{\partial \beta_{1}}{\partial r_1}=-\frac{\theta e^{r_2}\cosh l_{12}}{\sinh^2l_{12}}.
\end{equation*}
Since $u_i=-2e^{-r_i}$, then
\begin{equation}\label{equ:u3}
	\frac{\partial \beta_{1}}{\partial u_1}=-\frac{\theta e^{r_1+r_2}\cosh l_{12}}{2\sinh^2l_{12}}.
\end{equation}

Differentiating (\ref{110cotbeta1}) in $r_2$ and submitting the first equation of (\ref{110shl12}) into it, we have
\begin{equation*}
	\frac{\partial \beta_{1}}{\partial r_2}=\frac{\theta e^{r_1}}{\sinh^2l_{12}}.
\end{equation*}
Since $u_i=-2e^{-r_i}$, then
\begin{equation}\label{equ:u4}
\frac{\partial \beta_{1}}{\partial u_2}=\frac{\theta e^{r_1+r_2}}{2\sinh^2l_{12}}.
\end{equation}

Combining (\ref{equ:u3}) and (\ref{equ:u4}) gives
\[ \frac{\partial \beta_{1}}{\partial u_1}=-\cosh l_{12}\frac{\partial \beta_{1}}{\partial u_2}. \]
It follows by symmetry that
\[ \frac{\partial \beta_{2}}{\partial u_2}=-\cosh l_{12}\frac{\partial \beta_{2}}{\partial u_1}. \]
\end{proof}

\subsection{Existence theorem}

We rewrite Theorem \ref{thm of existence} (2) as follows.

\begin{theorem}\label{existence 2}
	Suppose $\Sigma$ is a cellular decomposed surface.
	If $\epsilon=1$, $\delta=0$ and $\theta: E\to \mathbb{R}_{>0}$, then for any function $\hat{K}\in \mathbb{R}^{|F|}_{>0}$,
	there exists a unique radii $r\in \mathbb{R}^{|F|}$ such that
	\begin{equation*}
		K_i(r)=\hat{K}_i , \quad \forall i\in \{1,...,|F|\},
	\end{equation*}
	if and only if
	\begin{equation*}
		\sum_{f_i\in F^\prime} \hat{K_i}
		<2|E^\prime| \pi.
	\end{equation*}
	Here $F^\prime$ is any nonempty subset of $F$ and $E^\prime$ is the set of all edges incident to any face in $F^\prime$.
\end{theorem}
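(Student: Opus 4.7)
My plan is to prove this via the continuity method applied to the curvature map $K \colon U \to \mathbb{R}^{|F|}$, where $U = (-\infty,0)^{|F|}$ is the admissible space in the parametrization $u_i = -2 e^{-r_i}$ from Section \ref{110}. Write $\Omega \subset \mathbb{R}_{>0}^{|F|}$ for the open polyhedral cone cut out by the inequalities $\sum_{f_i \in F'} \hat{K}_i < 2|E'|\pi$ over all nonempty $F' \subseteq F$. The uniqueness assertion is immediate from the injectivity of $K$ in Theorem \ref{Thm: GL}, so the whole task reduces to showing $K(U) = \Omega$.

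For the necessity direction $K(U) \subseteq \Omega$, I would take any $u \in U$ and any nonempty $F' \subseteq F$, expand $\sum_{f_i \in F'} K_i = \sum_{f_i \in F'} \sum_{e;e<f_i} 2\beta_{(e,f_i)}$, and regroup the sum by edges in $E'$: each edge with both endpoints in $F'$ contributes $2(\beta_{(e,f_i)} + \beta_{(e,f_j)})$, while each edge with only one endpoint in $F'$ contributes a single $2\beta_{(e,f_i)}$. Since in a $(1,1,0)$-type generalized hyperbolic triangle the two finite-vertex angles satisfy $\beta_1 + \beta_2 < \pi$ (the horocycle truncation does not change the finite angles of an ideal triangle), each edge contributes strictly less than $2\pi$, and the inequality follows.

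For sufficiency I would verify the three standard ingredients of the continuity method. The map $K$ is a smooth embedding by Theorem \ref{Thm: GL}, so $K(U)$ is open in $\mathbb{R}^{|F|}$; $K(U) \cap \Omega = K(U)$ is nonempty because applying $K$ to any $u \in U$ produces a point there by necessity; and $\Omega$ is convex, hence connected. Thus the entire problem reduces to closedness of $K(U)$ inside $\Omega$, which is where I expect the main obstacle.

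To establish closedness, I would take $u^{(n)} \in U$ with $K(u^{(n)}) \to \hat{K} \in \Omega$ and extract a subsequential limit in $U$. Passing to a subsequence so that each coordinate converges in $[-\infty,0]$, I partition the index set as $A \sqcup B \sqcup C$ according to whether $u_i^{(n)} \to -\infty$, to a value in $(-\infty,0)$, or to $0^-$. If $C$ were nonempty, Lemma \ref{limit 2}(1) applied to every edge incident to each $f_i$ with $i \in C$ would force $\beta_{(e,f_i)}^{(n)} \to 0$ and hence $K_i^{(n)} \to 0$, contradicting $\hat{K}_i > 0$. Hence $C = \emptyset$. If $A$ were nonempty, I would set $F' = A$ with incident edge set $E'$ and evaluate $\sum_{i \in A} K_i^{(n)}$ by regrouping over edges in $E'$: parts (3) and (2) of Lemma \ref{limit 2} give a per-edge limit of $2\pi$ for edges with both or exactly one endpoint in $A$ respectively, so $\sum_{i \in A} K_i^{(n)} \to 2|E'|\pi$. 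But this same limit equals $\sum_{i \in A} \hat{K}_i$, which is strictly less than $2|E'|\pi$ by $\hat{K} \in \Omega$, a contradiction. Therefore $A = \emptyset$ as well, the sequence $u^{(n)}$ already converges in $U$, and closedness is established, completing the proof.
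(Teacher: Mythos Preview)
Your proposal is correct and follows essentially the same continuity-method argument as the paper: the same change of variables, the same appeal to Theorem \ref{Thm: GL} for injectivity and openness, and the same boundary analysis via Lemma \ref{limit 2} to establish closedness of the image inside $\Omega$. Your write-up actually supplies the necessity argument ($\beta_1+\beta_2<\pi$ in an ideal hyperbolic triangle) that the paper leaves implicit, and your explicit trichotomy $A\sqcup B\sqcup C$ is a slightly cleaner organization of the paper's two boundary cases, but the substance is identical.
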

\proof
Define the map $Q=\nabla \mathcal{E}(u): U \to \mathbb{R}_{<0}^{|F|},  (u_{1},..., u_{|F|}) \longmapsto (-K_{1},... , -K_{|F|})$. According to Theorem \ref{Thm: GL}, $Q$ is a smooth embedding.
Define $\delta=\{-K\in \mathbb{R}^{|F|}_{<0}|\sum_{f_i\in F^\prime} K_i
<2|E^\prime| \pi, \text{for any nonempty subset $F^\prime$ of $F$}\}$.
Then $Q(U)\subseteq \delta$ and $Q(U)$ is open in $\delta$.
It remains to prove that $Q(U)$ is closed in $\delta$.

Assume $\{u^{m}\}^{+\infty}_{m=1}$ is a sequence of points in $U$
such that
$$\lim_{m \to +\infty}Q(u^{m})=-\lim_{m\rightarrow +\infty}K^{(m)}:=t\in \delta.$$
By taking a subsequence, say $\{u^{m}\}^{+\infty}_{m=1}$, we just need to prove that $\lim_{m \to +\infty}u^{m}:=s\in U$.
Otherwise, $\lim_{m \to +\infty}u^{m}\in \partial U$.
The boundary of the admissible space $U$ consists of the following two parts
\begin{enumerate}
	\item [(1)]
	$\partial_{0}U=\{u\in [-\infty,0]^{|F|}| \text{there exists at least one $i\in \{1,2,...,|F|\}$ such that $u_i=0$}\}$;
	\item [(2)]
	$\partial_{\infty}U=\{u\in [-\infty,0]^{|F|}| \text{there exists at least one $i\in \{1,2,...,|F|\}$ such that $u_i=-\infty$}\}$.
\end{enumerate}
For the case (1), by Lemma \ref{limit 2} (1), we have $K_{i}=\sum_{e;e<f_{i}}2\beta_{(e,f_{i})}$ converges to $0$.
This contradicts with $t\in \delta$.
For the case (2), if $s_i\ne0$  for any $i\in \{1,2,...,|F|\}$ but $ F_1=\{f_i\in F| s_i=-\infty\}\ne \emptyset $,
then  $$\sum_{f_i\in F_1} K_i(u^{m})=2\sum_{e=f_i\cap f_j\in E_1}[\beta_{(e,f_i)}(u^{m})+\beta_{(e,f_j)}(u^{m})]+2\sum_{e=f_i\cap f_j\in E_2}\beta_{(e,f_i)}(u^{m}),$$
where $E_1=\{e=f_i\cap f_j\in E|f_i,f_j\in F_1\}$ and $E_2=\{e=f_i\cap f_j\in E|f_i\in F_1, f_j\notin F_1\}$.

Taking the limit at both sides, by Lemma \ref{limit 2} (2) and (3) we have
\[ \lim_{m \to +\infty}\sum_{f_i\in F_1} K_i(u^m)=2|E_1|\pi+2|E_2|\pi=2|E(F_1)|\pi .\]
Here $E(F_1)$ is the set of all edges incident to any face in $F_1$.
This contradicts with $t\in \delta$.
Thus we finish the proof.
\qed

\begin{remark}
According to Lemma 3.6 in \cite{G-L}, given $\theta\in (0,+\infty)$, for any $r_1,r_2\in \mathbb{R}_{>0}$, there exists a $(1,1,-1)$ type generalized hyperbolic triangle with two edges of generalized lengths $r_1$ and $r_2$ so that the generalized angle between them is $\theta$ of type $-1$. Please refer to Figure \ref{Figure 1} (right). Then we can construct a generalized circle patterns of type $(1,1,-1)$. Note that rigidity result holds for generalized circle patterns of type $(1,1,-1)$ by Theorem \ref{Thm: GL}, but
we can not get the existence result. Specially, in this case, we cannot depict the image of the generalized combinatorial curvature $K$.  Since in a single $(1,1,-1)$ type generalized triangle, the admissible space of $(\beta_{1},\beta_{2})$ is a subset of $(0,\pi)^2$ with the boundary given by $\cos\beta_{2}=-\cosh\theta\cos\beta_{1}$ and $\cos\beta_{2}=-\frac{\cos\beta_{1}}{\cosh\theta}$, which is not linear in $\beta_1$ and $\beta_2$. As a result, we are unable to prove the existence theorem for the type of $(1,1,-1)$.
\end{remark}

\section{Prescribed curvature flows in the case of (1, 1, 0)}\label{Prescribed curvature flows in the case of (1,1,0)}
In this section, we introduce two combinatorial curvature flows for the generalized hyperbolic circle patterns on surfaces of type $(1, 1, 0)$ and prove the longtime existence and convergence of the solutions for these combinatorial curvature flows.

Given
$\theta: E\to (0,+\infty)$ and $\hat{K}: F\to  \mathbb{R}_{>0}$, we define the following two energy functions
\begin{equation*}
	\tilde{\mathcal{E}}(u)
	=\mathcal{E}(u)+\sum_{i=1}^{|F|}\hat{K}_iu_i
	=-\int_0^u\sum_{i=1}^{|F|}(K_i-\hat{K}_i)\mathrm{d}u_i
\end{equation*}
and
\begin{equation*}
	\mathcal{C}(u)=\frac{1}{2}||K-\hat{K}||^{2}
	=\frac{1}{2}\sum_{i=1}^{|F|}(K_i-\hat{K_i})^{2} .
\end{equation*}
The function $\tilde{\mathcal{E}}(u)$ is sometimes referred as the combinatorial Ricci energy, and the function $\mathcal{C}(u)$ is
referred as the combinatorial Calabi energy.
It is easy to check that $\tilde{\mathcal{E}}(u)$ is a strictly convex function on $U$ with $\nabla_{u_i} \tilde{\mathcal{E}}(u)=\hat{K}_i-K_i$.
Furthermore, $\nabla_{u_i} \mathcal{C}(u)=\sum_{j=1}^{|F|}\frac{\partial K_j}{\partial u_i}(K_j-\hat{K_j})=(\Delta(K-\hat{K}))_i$.

\subsection{The longtime behavior of the combinatorial Ricci flow}
By a change of variables, i.e., $u_{i}=-2e^{-r_{i}}$, the equation (\ref{Eq: CRF}) can be reformulated as
\begin{eqnarray}\label{flow:u4}
	\frac{\mathrm{d}u_i}{\mathrm{d}t}=K_i-\hat{K}_i.
\end{eqnarray}
Hence, the combinatorial Ricci flow (\ref{flow:u4}) is a negative gradient flow of $\tilde{\mathcal{E}}(u)$.
Furthermore,
\begin{equation*}
	\frac{\mathrm{d}\tilde{\mathcal{E}}(u(t))}{\mathrm{d}t}
	=<\nabla\tilde{\mathcal{E}}(u(t)),\frac{\mathrm{d}u(t)}{\mathrm{d}t}>
	=-\left \| \nabla\tilde{\mathcal{E}}(u(t)) \right \| ^{2}\le 0,
\end{equation*}
which implies that $\tilde{\mathcal{E}}(u)$ is decreasing along the combinatorial Ricci flow (\ref{flow:u4}).
Similarly, since $(\frac{\partial K_i}{\partial u_j})_{|F|\times |F|}$ is negative definite on $U$ by Lemma \ref{Lem: matrix negative 2}, then
\begin{equation*}
	\frac{\mathrm{d}  \mathcal{C}(u(t))}{\mathrm{d} t}= \sum_{i,j=1}^{|F|}\frac{\partial \mathcal{C}}{\partial K_i} \frac{\partial K_i}{\partial u_j}\frac{\mathrm{d} u_j}{\mathrm{d} t}=
	\sum_{i,j=1}^{|F|}(K_i-\hat{K_i})\frac{\partial K_i}{\partial u_j}(K_j-\hat{K_j})\le 0 .
\end{equation*}
Thus $\mathcal{C}(u)$ is also decreasing along the combinatorial Ricci flow (\ref{flow:u4}).

We rewrite Theorem \ref{flow} for the combinatorial Ricci flow as follows.
\begin{theorem}\label{Thm: CRF110}
	Under the same assumptions as those in Theorem \ref{thm of existence} (1),
	the solution of the combinatorial Ricci flow (\ref{flow:u4}) for the generalized circle patterns of type $(1,1,0)$  exists for all time and converges exponentially fast for any initial data if and only if \begin{equation*}
		\sum_{f_i\in F^\prime} \hat{K_i}
		<2|E^\prime| \pi.
	\end{equation*}
	Here $F^\prime$ is any nonempty subset of $F$ and $E^\prime$ is the set of all edges incident to any face in $F^\prime$.
\end{theorem}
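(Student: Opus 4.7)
The plan is to exploit the gradient-flow structure $\dot u = -\nabla \tilde{\mathcal{E}}(u) = K(u) - \hat K$ together with the strict convexity of $\tilde{\mathcal{E}}$ on $U$. Necessity is immediate: if $u(t)$ converges, then its limit is an equilibrium with $K = \hat K$, so by Theorem \ref{existence 2} the prescribed curvature inequality must hold. For sufficiency, Theorem \ref{existence 2} supplies a unique $\bar u \in U$ with $K(\bar u) = \hat K$, which is the unique minimum of $\tilde{\mathcal{E}}$.

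First I would prove longtime existence. The upper bound $u_i(t) \leq -\delta_i < 0$ comes from Lemma \ref{limit 2}~(1): as $u_i \to 0$ one has $\beta_{(e,f_i)} \to 0$ uniformly in the remaining coordinates (as is visible directly from (\ref{110beta1})--(\ref{110beta2})), so $K_i \to 0$ and $\dot u_i = K_i - \hat K_i < -\hat K_i/2 < 0$ near $\{u_i = 0\}$, which repels $u_i$ back into $U$; meanwhile $|\dot u|$ is bounded on $U$, ruling out finite-time escape to $-\infty$. Next I would observe that the squared distance $\Phi(t) := \tfrac12 |u(t) - \bar u|^2$ is a Lyapunov function: using $\nabla \tilde{\mathcal{E}}(\bar u) = 0$ and convexity of $\tilde{\mathcal{E}}$,
\[
\dot\Phi = -(u - \bar u)\cdot \nabla \tilde{\mathcal{E}}(u) \leq -\bigl(\tilde{\mathcal{E}}(u) - \tilde{\mathcal{E}}(\bar u)\bigr) \leq 0,
\]
so the orbit is trapped in the closed ball $\bar B_{|u(0) - \bar u|}(\bar u)$; combined with the upper bound, this places $u(t)$ in a compact subset $\mathcal{K} \subset U$.

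Next I would identify the $\omega$-limit. Since $\frac{d}{dt}\tilde{\mathcal{E}} = -\|K - \hat K\|^2$ and $\tilde{\mathcal{E}}(u(t)) \geq \tilde{\mathcal{E}}(\bar u)$, we have $\int_0^\infty \|K(u(t)) - \hat K\|^2\,dt < \infty$, so a subsequence $t_k \to \infty$ yields $K(u(t_k)) \to \hat K$; by compactness, passing to a further subsequence, $u(t_k) \to u^* \in \overline{\mathcal{K}}$. The case $u_i^* = 0$ is excluded by the upper bound, and if $F_1 := \{i : u_i^* = -\infty\} \neq \emptyset$ then Lemma \ref{limit 2}~(2)--(3) gives $\sum_{i \in F_1} K_i(u(t_k)) \to 2|E(F_1)|\pi$, which combined with $K(u(t_k)) \to \hat K$ forces $\sum_{i \in F_1}\hat K_i = 2|E(F_1)|\pi$, contradicting the strict hypothesis. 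Hence $u^* \in U$, $K(u^*) = \hat K$, and $u^* = \bar u$ by the injectivity part of Theorem \ref{Thm: GL}. Monotonicity of $\Phi$ then upgrades this subsequential convergence to $u(t) \to \bar u$. For the exponential rate, linearizing at $\bar u$ gives $\dot v = \nabla K(\bar u)\, v$ with $\nabla K(\bar u)$ symmetric negative definite by Lemma \ref{Lem: matrix negative 2}; if $\lambda > 0$ satisfies $-\nabla K(\bar u) \geq \lambda I$, Taylor expansion yields $\dot\Phi \leq -2\lambda\Phi + O(\Phi^{3/2})$ near $\bar u$, and Gronwall produces $|u(t) - \bar u| = O(e^{-\lambda t})$.

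The main obstacle I anticipate is the boundedness of the orbit. The identification of $|u(t) - \bar u|$ itself as a Lyapunov function---a direct consequence of convexity of $\tilde{\mathcal{E}}$ together with $\nabla \tilde{\mathcal{E}}(\bar u) = 0$---is the crucial simplification. Without it, one would need either to prove properness of $\tilde{\mathcal{E}}$ at $\partial_\infty U$, which is delicate since the recession along a direction $v \leq 0$ depends sensitively on the ratios $|v_i|$ through Lemma \ref{limit 2}, or to carry out a Barbalat-type argument requiring uniform Lipschitz continuity of $t \mapsto K(u(t))$ on all of $U$.
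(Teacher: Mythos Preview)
Your argument is correct and follows the paper's overall strategy---barrier at $\{u_i=0\}$ via Lemma~\ref{limit 2}(1), boundedness of the orbit, extraction of a subsequence along which $K\to\hat K$, identification of the limit via Theorem~\ref{Thm: GL}, and linearization for the exponential rate---with one useful variation: you introduce $\Phi(t)=\tfrac12|u(t)-\bar u|^2$ as a second Lyapunov function. The paper instead infers boundedness of the orbit from ``$\tilde{\mathcal{E}}$ decreasing and bounded below'' (tacitly using that sublevel sets of a strictly convex function on $U$ possessing a minimum are bounded) and then cites the Lyapunov Stability Theorem from \cite{Pontryagin} for full convergence and the exponential rate; your $\Phi$ gives the bound on $|u(t)-\bar u|$ directly from convexity, and its monotonicity cleanly upgrades subsequential to full convergence without that black-box citation, so your route is slightly more self-contained. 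One redundancy worth trimming: once you have shown that the orbit lies in a compact $\mathcal{K}\subset U$, the subsequent case analysis of $u_i^*=0$ and $F_1=\{i:u_i^*=-\infty\}$ in your $\omega$-limit discussion is unnecessary, since any subsequential limit already lies in $\mathcal{K}\subset U$.
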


\proof
Suppose that the solution $u(t)$ of the combinatorial Ricci flow (\ref{flow:u4}) for the generalized circle patterns of type $(1,1,0)$  exists for all time and converges to $u^\ast\in U$, i.e., $\lim_{t \to +\infty}u(t)=u^{\ast}$. Then by Theorem \ref{thm of existence}, there exists a unique generalized circle pattern of type $(1,1,0)$ with $K^{\ast}=K(u^{\ast})\in \mathbb{R}^{|F|}_{>0}$ satisfying
\begin{equation*}
	\sum_{f_i\in F^\prime} K_i^\ast
	<2|E^\prime| \pi,
\end{equation*}
for any nonempty subset  $F^\prime\subset F$.
Choosing a sequence of time $\{t_{n}=n\}_{n=1}^{+\infty}$,
we have
\begin{equation*}
	u_i(n+1)-u_i(n)=u_i^\prime(\eta_{n})
	=K_i(u(\eta_{n}))-\hat{K}_i, \eta_{n}\in (n,n+1).
\end{equation*}
Taking the limit on both sides gives
\begin{equation*}
	\lim_{n \to +\infty}(K_i(u(\eta_{n}))-\hat{K}_i)
	=\lim_{n \to +\infty}(u_i(n+1)-u_i(n))=0.
\end{equation*}
Since $K_i(u)$ is a continuous function of $u$, we have $\hat{K}_i=\lim_{n \to +\infty}K_i(u(\eta_{n}))=K_i(u^{\ast})=K_i^{\ast}$ for all $i\in \{1,...,|F|\}$, which implies that $\sum_{f_i\in F^\prime} \hat{K_i}
<2|E^\prime| \pi$ holds for any nonempty subset  $F^\prime\subset F$.

Conversely, if $\sum_{f_i\in F^\prime} \hat{K_i}
<2|E^\prime| \pi$ holds for any nonempty subset  $F^\prime\subset F$,
then by Theorem \ref{thm of existence},
there exists a unique $\tilde{u}\in U$ such that $K_i(\tilde{u})=\hat{K}_i$ for all $i\in \{1,...,|F|\}$.
Therefore, $\tilde{\mathcal{E}}(u)$ has a critical point $\tilde{u}$ and is bounded from below.
Since $\tilde{\mathcal{E}}(u)$ is decreasing along the combinatorial Ricci flow (\ref{flow:u4}),
then the solution $u(t)$ is contained in a compact subset of $\mathbb{R}^{|F|}$.
We claim that $u(t)$ is contained in a compact subset of $U=(-\infty,0)^{|F|}$.
We shall prove the theorem assuming the claim and then prove the claim.

Choosing a sequence of time $\{t_{n}=n\}_{n=1}^{+\infty}$, we have \[ \tilde{\mathcal{E}}(u(n+1))-\tilde{\mathcal{E}}(u(n))
=\tilde{\mathcal{E}}^\prime(u(t))|_{\eta_{n}}
=<\nabla\tilde{\mathcal{E}}, \frac{\mathrm{d} u}{\mathrm{d} t}>|_{\eta_{n}}
=-\left \| \nabla\tilde{\mathcal{E}} \right \|^{2}|_{\eta_{n}},\ \eta_{n}\in (n,n+1) .\]
Then taking the limit on both sides gives
\begin{equation*}
	\lim_{n \to +\infty}(-\left \| \nabla\tilde{\mathcal{E}} \right \|^{2}|_{\eta_{n}})
	=\lim_{n \to +\infty}(\tilde{\mathcal{E}}(u(n+1))
	-\tilde{\mathcal{E}}(u(n)))=0,
\end{equation*}
which implies that $\lim_{n \to +\infty}K_i(u(\eta_{n}))=\hat{K}_i$ for all $i\in \{1,...,|F|\}$.
Since $u(t)$ is contained in a compact subset of $U$, there exists a subsequence of $\{\eta_{n}\}_{n=1}^{+\infty}$, still denoted as $\{\eta_{n}\}_{n=1}^{+\infty}$, such that $\lim_{n \to +\infty}u(\eta_{n})=\tilde{u}^\prime$.
This implies
$K_f(\tilde{u}^\prime)=\lim_{n\rightarrow +\infty}K_f(u(\eta_{n}))=K_f(\tilde{u})$.
This further implies $\tilde{u}=\tilde{u}^\prime$ by Theorem \ref{Thm: GL}.
Set $\Gamma(u)=K-\hat{K}$, then the matrix $D\Gamma|_{u=\tilde{u}}<0$  by Lemma \ref{Lem: matrix negative 1}, i.e., $D\Gamma|_{u=\tilde{u}}$ has $|F|$ negative eigenvalues.
This implies that $\tilde{u}$ is a local attractor of the combinatorial Ricci flow (\ref{flow:u4}).
The conclusion follows from Lyapunov Stability Theorem (\cite{Pontryagin}, Chapter 5).

Now we prove the claim.
The boundary of $U=(-\infty,0)^{|F|}$ consists of the following two parts
\begin{enumerate}
	\item [(1)]
	$\partial_{\infty}U=\{u\in [-\infty,0]^{|F|}| \text{there exists at least one $i\in \{1,2,...,|F|\}$ such that $u_i=-\infty$}\}$;
	\item [(2)]
	$\partial_{0}U=\{u\in [-\infty,0]^{|F|}| \text{there exists at least one $i\in \{1,2,...,|F|\}$ such that $u_i=0$}\}$.
\end{enumerate}
The solution $u(t)$ can not reach the boundary $\partial_{\infty}U$
since $u(t)$ is contained in a compact subset of $\mathbb{R}^{|F|}$. Then we just need to consider the second part of the boundary.

Suppose the solution $u(t)$ of the combinatorial Ricci flow ($\ref{flow:u4}$) can reach the boundary $\partial_{0}U $.
Then there exists some time $t'\in (0,+\infty]$ and $i\in \{1,2,...,|F|\}$ such that $\lim_{t\rightarrow t'}u_i(t)=0$.
By Lemma \ref{limit 2} (1), we have $\lim_{t\rightarrow t'}\beta_{(e,f_i)}(u_i(t))=0$.
There exists $c\in (u_i(0),0)$, such that whenever $u_i(t)>c$,
the angle $\beta_{(e,f_i)}(u_i(t))$ is smaller than $\frac{\hat{K_i}}{2n}$ for each $e<f_i$, where $n$ is the number of edges contained in face $f_i$. This implies $K_i(u_i(t))<\hat{K_i}$ for any $u_i(t)>c$.
Choose a time $t_0\in (0,t')$ such that $u_i(t_0) > c$, which can be done because $\lim_{t\to t'}u_i(t)=0$. Set $a=\inf\{t<t_0|u_i(s)>c,\forall s\in(t,t_0]\}$, and then $u_i(a)= c$. Note that $\frac{\mathrm{d} u_i}{\mathrm{d} t}=K_i-\hat{K_i}<0$ on $(a,t_0)$, we have $u_i(t_0)<u_i(a)=c$, which contradicts with $u_i(t_0) > c $.
Therefore, the solution $u(t)$ of the combinatorial Ricci flow ($\ref{flow:u4}$) can not reach the boundary $\partial_{0}U $.
\qed

\subsection{The longtime behavior of the combinatorial Calabi flow}
By a change of variables, i.e., $u_{i}=-2e^{-r_{i}}$, the equation (\ref{Eq: CCF}) can be reformulated as
\begin{equation}\label{flow:u3}
	\frac{\mathrm{d}u_{i}}{\mathrm{d}t}=
	-\Delta(K-\hat{K})_i.
\end{equation}
Hence, the combinatorial Calabi flow (\ref{flow:u3}) is a negative gradient flow of $\mathcal{C}(u)$.
Furthermore,
\begin{equation*}
	\frac{\mathrm{d}  \mathcal{C}(u(t))}{\mathrm{d} t}=<\nabla\mathcal{C}(u(t)),\frac{\mathrm{d}u(t)}{\mathrm{d}t}>
	=-\left \| \nabla\mathcal{C}(u(t)) \right \| ^{2}\le 0,
\end{equation*}
which implies that $\mathcal{C}(u)$ is decreasing along the combinatorial Calabi flow (\ref{flow:u3}).
Similarly, since $(\frac{\partial K_i}{\partial u_j})_{|F|\times |F|}$ is negative definite on $U$ by Lemma \ref{Lem: matrix negative 2}, then
\begin{equation*}
	\frac{\mathrm{d}\tilde{\mathcal{E}}(u(t))}{\mathrm{d}t}
	=\sum_{i=1}^{|F|}\frac{\partial \tilde{\mathcal{E}}}{\partial u_i}\frac{\mathrm{d} u_i}{\mathrm{d} t}  =
	\sum_{i,j=1}^{|F|}(K_i-\hat{K_i})\frac{\partial K_i}{\partial u_j}(K_j-\hat{K_j}) \le 0.
\end{equation*}
Thus $\tilde{\mathcal{E}}(u)$ is also decreasing along the combinatorial Calabi flow (\ref{flow:u3}).

We rewrite Theorem \ref{flow} for the combinatorial Calabi flow as follows.
\begin{theorem}\label{Thm: CCF110}
	Under the same assumptions as in Theorem \ref{thm of existence},
	the solution of the combinatorial Calabi flow (\ref{flow:u3}) for the generalized circle patterns of type $(1,1,0)$ exists for all time and converges exponentially fast for any initial data if and only if \begin{equation*}
		\sum_{f_i\in F^\prime} \hat{K_i}
		<2|E^\prime| \pi.
	\end{equation*}
	Here $F^\prime$ is any nonempty subset of $F$ and $E^\prime$ is the set of all edges incident to any face in $F^\prime$.
\end{theorem}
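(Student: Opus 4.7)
The plan is to parallel the proof of Theorem \ref{Thm: CRF110} for the combinatorial Ricci flow, using the combinatorial Calabi energy $\mathcal{C}(u)$ as the natural Lyapunov functional while still exploiting the combinatorial Ricci energy $\tilde{\mathcal{E}}(u)$ for confinement, since it is also monotone along the Calabi flow by the computation immediately preceding the theorem.

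For necessity, suppose a Calabi flow solution converges to some $u^\ast \in U$. Choosing $t_n = n$ and applying the mean value theorem gives $u_i(n+1)-u_i(n) = u_i'(\eta_n) = -\Delta(K-\hat{K})_i|_{u(\eta_n)}$ for some $\eta_n \in (n, n+1)$. The left side tends to $0$, so by continuity $\Delta(K(u^\ast)-\hat{K}) = 0$; Lemma \ref{Lem: matrix negative 2} shows $\Delta$ is invertible, so $K(u^\ast) = \hat{K}$, and Theorem \ref{thm of existence}(1) delivers the required inequality.

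For sufficiency, Theorem \ref{thm of existence}(1) produces the unique equilibrium $\tilde{u} \in U$ with $K(\tilde{u}) = \hat{K}$. The same coercivity argument used in Theorem \ref{Thm: CRF110} shows that the hypothesis $\sum_{f_i \in F_1} \hat{K}_i < 2|E(F_1)|\pi$ forces $\tilde{\mathcal{E}} \to +\infty$ along any ray approaching $\partial_\infty U$: the asymptotic slope of $\tilde{\mathcal{E}}$ in the direction $-\sum_{i\in F_1} e_i$ equals $2|E(F_1)|\pi - \sum_{i\in F_1}\hat{K}_i > 0$ by Lemma \ref{limit 2}(2)(3). Because $\tilde{\mathcal{E}}$ is decreasing along the Calabi flow, the orbit stays in a compact subset of $\mathbb{R}^{|F|}$.

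The main obstacle is to exclude the boundary $\partial_0 U = \{u_i = 0 \text{ for some } i\}$. Since $\mathcal{C}(u(t)) \le \mathcal{C}(u(0))$, the discrepancies $|K_j - \hat{K}_j|$ remain uniformly bounded along the flow. Suppose for contradiction that $u_i(t) \to 0^-$, i.e.\ $r_i \to +\infty$. From (\ref{equ:u3}) and (\ref{equ:u4}), together with the asymptotic $\sinh l_e \sim \cosh l_e \sim \tfrac12 e^{r_i}(\theta(e)^2 e^{r_j} + e^{-r_j})$ on each adjacent edge, one checks that $\partial K_i/\partial u_i$ stays bounded and bounded away from zero with negative sign, whereas $\partial K_i/\partial u_j \to 0$ for every $j \neq i$; Lemma \ref{limit 2}(1) also gives $K_i \to 0$. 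Therefore the Calabi velocity $u_i' = -\sum_j (\partial K_i/\partial u_j)(K_j-\hat{K}_j)$ asymptotes to $-(\partial K_i/\partial u_i)(-\hat{K}_i) < 0$, contradicting $u_i(t) \to 0^-$ exactly as in the Ricci flow proof. Hence the orbit is contained in a compact subset of $U$ and the solution exists for all time.

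Finally, the identity $\tilde{\mathcal{E}}(u(n+1)) - \tilde{\mathcal{E}}(u(n)) = (K-\hat{K})^\top \Delta (K-\hat{K})|_{\eta_n}$, combined with the fact that $\tilde{\mathcal{E}}$ is bounded below, shows the right side tends to $0$; uniform negative definiteness of $\Delta$ on the compact orbit then forces $\|K(u(\eta_n)) - \hat{K}\| \to 0$, so any subsequential limit $\tilde{u}'$ of $u(\eta_n)$ satisfies $K(\tilde{u}') = \hat{K} = K(\tilde{u})$, whence $\tilde{u}' = \tilde{u}$ by Theorem \ref{Thm: GL}. A direct differentiation of the Calabi flow shows its Jacobian at $\tilde{u}$ equals $-\Delta^2|_{\tilde{u}}$, which is negative definite, so $\tilde{u}$ is a local exponentially stable equilibrium; the Lyapunov stability theorem (\cite{Pontryagin}, Chapter 5) then upgrades the subsequential convergence to global exponential convergence $u(t) \to \tilde{u}$.
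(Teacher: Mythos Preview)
Your argument is correct and follows the same overall architecture as the paper's proof (necessity via the mean value theorem and invertibility of $\Delta$; confinement via the Ricci energy $\tilde{\mathcal{E}}$; exclusion of $\partial_0 U$; convergence), but two steps are executed differently and it is worth noting what each choice buys.

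For the exclusion of $\partial_0 U$, the paper does not estimate $\partial K_i/\partial u_i$ and $\partial K_i/\partial u_j$ separately. Instead it invokes Lemma~\ref{betaibetaj} to factor the Calabi velocity as
\[
\frac{du_i}{dt}=2\sum_{e=f_i\cap f_j}\frac{\partial\beta_{(e,f_i)}}{\partial u_j}\bigl[\cosh l_{ij}\,(K_i-\hat K_i)-(K_j-\hat K_j)\bigr],
\]
and then observes that $\cosh l_{ij}\to+\infty$ uniformly as $u_i\to 0^-$ (with the other $u_j$ already confined to a bounded set), while $K_i-\hat K_i\to -\hat K_i<0$ and $K_j-\hat K_j$ is bounded since $K_j\in(0,2n_j\pi)$; positivity of $\partial\beta_{(e,f_i)}/\partial u_j$ from (\ref{equ:u4}) then yields $du_i/dt<0$. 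Your direct asymptotic analysis of the two partials reaches the same conclusion, but the paper's factorization via Lemma~\ref{betaibetaj} makes the dominance of the diagonal term transparent without any estimate on $\partial K_i/\partial u_i$.

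For exponential convergence, the paper does not pass through subsequential limits and Lyapunov stability. Once the orbit is trapped in a compact subset of $U$, continuity of eigenvalues gives a uniform bound $-\sqrt{\lambda_0}$ on the spectrum of $\Delta$, so
\[
\frac{d\mathcal{C}}{dt}=-(K-\hat K)^{\!\top}\Delta^2(K-\hat K)\le -2\lambda_0\,\mathcal{C},
\]
whence $\mathcal{C}(u(t))\le e^{-2\lambda_0 t}\mathcal{C}(u(0))$, and the smooth embedding $Q$ of Theorem~\ref{Thm: GL} converts this directly into $\|u(t)-\tilde u\|^2\le C\,e^{-2\lambda_0 t}$. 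This Gronwall route is shorter and gives the exponential rate immediately, whereas your Lyapunov argument is the natural carry-over from the Ricci-flow proof.
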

\proof
Suppose that the solution $u(t)$ of the combinatorial Calabi flow (\ref{flow:u3}) for the generalized circle patterns of type $(1,1,0)$  exists for all time and converges to $u^\ast\in U$, i.e., $\lim_{t \to +\infty}u(t)=u^{\ast}$. Then by Theorem \ref{thm of existence}, there exists a unique generalized circle pattern of type $(1,1,0)$ with $K^{\ast}=K(u^{\ast})\in \mathbb{R}^{|F|}_{>0}$ satisfying
\begin{equation*}
	\sum_{f_i\in F^\prime} K_i^\ast
	<2|E^\prime| \pi,
\end{equation*}
for any nonempty subset  $F^\prime\subset F$.
Choosing a sequence of time $\{t_{n}=n\}_{n=1}^{+\infty}$, we have \[ u_{i}(n+1)-u_{i}(n)=u_{i}'(\eta_{n})=-\Delta(K(u(\eta_{n}))-\hat{K})_i, \eta_{n}\in (n,n+1) .\]
Taking the limit on both sides gives
\[ \lim_{n \to +\infty}-\Delta(K(u(\eta_{n}))-\hat{K})_i=
\lim_{n \to +\infty}(u_{i}(n+1)-u_{i}(n))=0 .
\]
Since $\Delta$ is negative definite on $U$ by Lemma \ref{Lem: matrix negative 2},
then we have $\lim_{n \to +\infty} (K_i(u(\eta_{n}))-\hat{K}_i)=0$, which yields that $\sum_{f_i\in F^\prime} \hat{K_i}
<2|E^\prime| \pi$ holds for any nonempty subset  $F^\prime\subset F$.

Conversely, if $\sum_{f_i\in F^\prime} \hat{K_i}
<2|E^\prime| \pi$ for any nonempty subset $F^\prime$,
then by Theorem \ref{thm of existence},
there exists a unique $\tilde{u}\in U$ such that $K_i(\tilde{u})=\hat{K}_i$ for all $i\in \{1,2,...,|F|\}$.
Therefore, $\tilde{\mathcal{E}}(u)$ has a critical point $\tilde{u}$ and is bounded from below.
Since $\tilde{\mathcal{E}}(u)$ is decreasing along the combinatorial Calabi flow (\ref{flow:u3}),
then the solution $u(t)$ is contained in a compact subset of $\mathbb{R}^{|F|}$. We also claim that $u(t)$ is contained in a compact subset of $U=(-\infty,0)^{|F|}$. We shall prove the theorem assuming the claim and then prove the claim.

Note that the discrete Laplace operator $\Delta$ is strictly negative by Lemma \ref{Lem: matrix negative 2}. By the continuity of the eigenvalues of $\Delta$, there exists $\lambda_0>0$ such that each eigenvalue of $\Delta$ is less than $-\sqrt{\lambda_0}$ along the combinatorial Calabi flow (\ref{flow:u3}). Then along the combinatorial Calabi flow (\ref{flow:u3}), we have
\begin{equation*}
	\frac{\mathrm{d}  \mathcal{C}(u(t))}{\mathrm{d} t}=\sum_{i=1}^{|F|}\frac{\partial \mathcal{C}}{\partial u_i}\frac{\mathrm{d} u_i}{\mathrm{d} t} =-(K-\hat{K})^{\mathrm{T}}\Delta^2 (K-\hat{K})\le -2\lambda_0 \mathcal{C}(u(t)),
\end{equation*}
which implies that $\mathcal{C}(u(t))=\frac{1}{2}||K(t)-\hat{K}||^{2}\le e^{-2\lambda_0 t}||K(0)-\hat{K}||^{2}$.
According to Theorem \ref{Thm: GL},
we have $||u(t)-\tilde{u}||^2\le C_1||K(t)-\hat{K}||^{2}\le C_2e^{-2\lambda_0 t}$ for some positive constants $C_1$, $C_2$. This completes the proof.

Now we prove the claim.
The boundary of $U$ consists of the following two parts
\begin{enumerate}
	\item [(1)]
	$\partial_{\infty}U=\{u\in [-\infty,0]^{|F|}| \text{there exists at least one $i\in \{1,2,...,|F|\}$ such that $u_i=-\infty$}\}$;
	\item [(2)]
	$\partial_{0}U=\{u\in [-\infty,0]^{|F|}| \text{there exists at least one $i\in \{1,2,...,|F|\}$ such that $u_i=0$}\}$.
\end{enumerate}
The solution $u(t)$ can not reach the boundary $\partial_{\infty}U$ since $u(t)$ is contained in a compact subset of $\mathbb{R}^{|F|}$. Then we just need to consider the second part of the boundary.

Suppose the solution $u(t)$ of the combinatorial Calabi flow ($\ref{flow:u3}$) can reach the boundary $\partial_{0}U $.
Then there exists some time $t'\in (0,+\infty]$ and $i\in \{1,2,...,|F|\}$ such that $\lim_{t\rightarrow t'}u_i(t)=0$.
According to Lemma \ref{betaibetaj}, for $e=f_i\cap f_j\in E$, we have
\[ \frac{\partial \beta_{(e,f_i)}}{\partial u_i}=-\cosh l_{ij}\frac{\partial \beta_{(e,f_i)}}{\partial u_j}. \]
Note that
\begin{align*}
	\frac{\mathrm{d}u_i}{\mathrm{d}t}&=-\Delta(K-\hat{K})_i\\&=-\frac{\partial K_i}{\partial u_i}(K_i-\hat{K_i})-\sum_{j\ne i}\frac{\partial K_i}{\partial u_j}(K_j-\hat{K_j})\\
	&=-2\sum_{e=f_i\cap f_j\in E}\frac{\partial \beta_{(e,f_i)}}{\partial u_i}(K_i-\hat{K_i})-2\sum_{e=f_i\cap f_j\in E}\frac{\partial \beta_{(e,f_i)}}{\partial u_j}(K_j-\hat{K_j})\\
	&=2\sum_{e=f_i\cap f_j\in E}\frac{\partial \beta_{(e,f_i)}}{\partial u_j}[\cosh l_{ij}(K_i-\hat{K_i})-(K_j-\hat{K_j})] ,
\end{align*}

By Lemma \ref{limit 2} (1), there exists a number $M_1\in (-\infty,0)$ such that if $u_i\ge M_1$, then $K_i-\hat{K_i}<0$.
Suppose $u_i(t)\rightarrow 0$ and then $r_i(t)\rightarrow +\infty$.
Thus,
\begin{align*}
	\cosh l_{ij}
	&=\frac{\theta(e)^2}{2}e^{r_i+r_j}+\cosh(r_i-r_j) \\
	&=e^{r_i}\left(\frac{\theta(e)^2}{2}e^{r_j}+\frac{1}{2}e^{-r_j}+\frac{1}{2}e^{r_j-2r_i}\right)\\
	&\ge e^{r_i} \cdot \min\{1,\frac{\theta(e)^2}{2}\}\cdot\cosh r_j\\
	&\rightarrow+\infty.
\end{align*}
Therefore, as $u_i(t)\rightarrow 0$, $\cosh l_{ij}$ tends to $+\infty$ uniformly. Since $\hat{K_j}$ is given and $K_j>0$,
then there exists a number $M_2\in (M_1,0)$ such that if $u_i\ge M_2$, then
\begin{equation*}
	\cosh l_{ij}(K_i-\hat{K_i})-(K_j-\hat{K_j})<0
\end{equation*}
for each $e=f_i\cap f_j\in E$.
By (\ref{equ:u4}), we have $\frac{\partial \beta_{(e,f_i)}}{\partial u_j}>0$.
Hence, if $u_i\ge M_2$, we have $\frac{\mathrm{d}u_i}{\mathrm{d}t}<0$.
The rest of the proof is the same as that of the combinatorial Ricci flow in Theorem \ref{Thm: CRF110},
we omit it here.
\qed

\section{Generalized hyperbolic circle patterns in the case of (0, 0, $\delta$)}\label{Generalized hyperbolic circle patterns in the case of (0,0,delta)}

\subsection{The admissible space of generalized circle patterns in the case of (0, 0, $\delta$)}\label{001}
In this section, we will characterize the existence of a (0, 0, $\delta$) type generalized hyperbolic triangle with two prescribed generalized edge lengths and the prescribed generalized angle between the two edges.

According to Lemma 3.6 in \cite{G-L}, given $\theta\in \mathring{I}_\delta$, for any $r_{1}$, $r_{2}\in \mathbb{R}$,
there exists a $(0, 0, \delta)$ type generalized hyperbolic triangle with two edges of generalized lengths $r_1$ and $r_2$ so that the generalized angle between them is $\theta$ of type $\delta$. Denote the generalized length of the edge opposite to $\theta$ by $l_{12}$ and the generalized angles opposite to $ r_{2}, r_{1}$ by $\beta_{1},\beta_{2}$, respectively  .
Please refer to Figure \ref{Figure 3} for the cases of $\delta=1,-1,0$, respectively. Guo-Luo (\cite{G-L} Appendix A) gave the following formulas of cosine and sine laws for the $(0, 0, \delta)$ type generalized hyperbolic triangle:
\begin{description}
\item[(1)] for $\delta=1$,
\begin{equation}\label{001beta1}
\beta_1^2=\frac{e^{r_{2}}-e^{l_{12}-r_{1}}}{e^{r_{1}+l_{12}}},
\end{equation}
\begin{equation*}
\beta_2^2=\frac{e^{r_{1}}-e^{l_{12}-r_{2}}}{e^{r_{2}+l_{12}}},
\end{equation*}
\begin{equation}\label{001chl12}
e^ {l_{12}}=e^{r_1+r_2}\sin^2 \frac{\theta}{2},
\end{equation}
\begin{equation}\label{001shl12}
e^ {l_{12}}=\frac{e^{r_{2}}\sin \theta }{2\beta_{1}}=\frac{e^{r_{1}}\sin \theta }{2\beta_{2}};
\end{equation}
\item[(2)] for $\delta=-1$,
\begin{equation}\label{00-1beta1}
\beta_1^2=\frac{e^{r_{2}}+e^{l_{12}-r_{1}}}{e^{r_{1}+l_{12}}},
\end{equation}
\begin{equation*}
\beta_2^2=\frac{e^{r_{1}}+e^{l_{12}-r_{2}}}{e^{r_{2}+l_{12}}},
\end{equation*}
\begin{equation}\label{00-1chl12}
e^ {l_{12}}=e^{r_1+r_2}\sinh^2 \frac{\theta}{2},
\end{equation}
\begin{equation}\label{00-1shl12}
e^ {l_{12}}=\frac{e^{r_{2}}\sinh \theta }{2\beta_{1}}=\frac{e^{r_{1}}\sinh \theta }{2\beta_{2}};
\end{equation}
\item[(3)] for $\delta=0$,
\begin{equation}\label{000beta1}
\beta_{1}^2=e^{r_2-r_1-l_{12}},
\end{equation}
\begin{equation*}
\beta_{2}^2=e^{r_1-r_2-l_{12}},
\end{equation*}
\begin{equation}\label{000shl12}
e^{l_{12}}=\frac{\theta e^{r_2} }{\beta_{1}}=\frac{\theta e^{r_1} }{\beta_{2}}.
\end{equation}
\end{description}

\begin{figure}[htbp]
	\centering
	\includegraphics[scale=1.1]{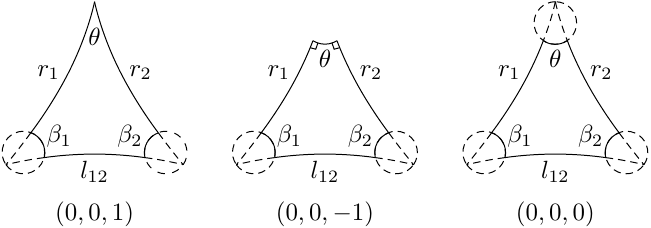}
	\caption{Generalized hyperbolic triangles for $\epsilon=0$, $\delta\in \{1,-1,0\}$}
	\label{Figure 3}
\end{figure}

By a change of variables
$$u_{i}=-2e^{-r_i},$$
the admissible space of a (0, 0, $\delta$) type generalized hyperbolic quadrilateral in the parameter $u$ is
$U_{e}= (-\infty,0)^{2}$.
Suppose $U$ is the admissible space of generalized circle patterns in the case of (0, 0, $\delta$).
As a result, $U=\cap_{e\in E}U_{e}=(-\infty,0)^{|F|}$ is a convex polyhedron.
Furthermore, Guo-Luo proved the following result.

\begin{lemma}(\cite{G-L})\label{Lem: matrix negative 1}
	For a fixed generalized angle $\theta\in \mathring{I}_\delta$,
	the matrix $\frac{\partial (\beta_1, \beta_2)}{\partial (u_1, u_2)}$ is symmetric and negative definite on $U_{e}$.
	As a result, the matrix $\frac{\partial (K_1,...,  K_{|F|})}{\partial(u_1,...,u_{|F|})}$ is symmetric and negative definite on $U$.
\end{lemma}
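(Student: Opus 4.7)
The plan is to derive closed-form expressions for $\beta_1$ and $\beta_2$ as functions of $(r_1, r_2, \theta)$ alone by eliminating $l_{12}$ from the cosine and sine laws (\ref{001beta1})--(\ref{000shl12}), and then to read off the Jacobian directly. For $\delta=1$, I would substitute (\ref{001chl12}) into (\ref{001beta1}) and use $1-\sin^2(\theta/2)=\cos^2(\theta/2)$ to get $\beta_1^2 = e^{-2r_1}\cot^2(\theta/2)$, hence $\beta_1 = e^{-r_1}\cot(\theta/2)$. For $\delta=-1$, the analogous substitution of (\ref{00-1chl12}) into (\ref{00-1beta1}) combined with $1+\sinh^2(\theta/2)=\cosh^2(\theta/2)$ yields $\beta_1 = e^{-r_1}\coth(\theta/2)$. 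For $\delta=0$, inserting (\ref{000shl12}) into (\ref{000beta1}) and solving for $\beta_1$ gives $\beta_1 = e^{-r_1}/\theta$. The expression for $\beta_2$ is obtained in each case by swapping the indices $1 \leftrightarrow 2$.

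The key structural consequence is that $\beta_i$ depends only on $r_i$ and $\theta$, not on the opposite radius. In the variables $u_i = -2e^{-r_i}$ this becomes the linear relation
\[
\beta_i = -\tfrac{1}{2}\,c(\theta)\,u_i,\qquad c(\theta) = \begin{cases}\cot(\theta/2),& \delta=1,\\ \coth(\theta/2),& \delta=-1,\\ 1/\theta,& \delta=0,\end{cases}
\]
with $c(\theta)>0$ throughout $\mathring{I}_\delta$. The local Jacobian
\[
\frac{\partial(\beta_1,\beta_2)}{\partial(u_1,u_2)} = -\tfrac{1}{2}\,c(\theta)\,I_2
\]
is therefore a strictly negative multiple of the identity on $U_e$, which is trivially symmetric and negative definite.

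For the global statement, assemble $K_i = \sum_{e<f_i} 2\beta_{(e,f_i)}$. Since each summand depends only on $u_i$ (via the same linear formula applied to $r(f_i)$), one obtains $\partial K_i/\partial u_j = 0$ for $j \ne i$ and $\partial K_i/\partial u_i = -\sum_{e<f_i} c(\theta(e)) < 0$, so the global curvature Jacobian is diagonal with strictly negative diagonal entries, hence symmetric and negative definite on $U$. The only substantive step is the algebraic elimination of $l_{12}$ in the first paragraph; once the closed forms for the $\beta_i$ are in hand, the definiteness claims reduce to inspection of a diagonal matrix, which is what makes the $(0,0,\delta)$ case strikingly simpler than the $(1,1,0)$ case treated in Lemma~\ref{Lem: matrix negative 2}.
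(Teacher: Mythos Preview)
Your proposal is correct. The paper does not supply its own proof of this lemma (it is quoted from \cite{G-L}), but the closed-form identities you derive are precisely those the paper establishes a few lines later in the proof of Lemma~\ref{limit 1}, culminating in equation~(\ref{equ:betai}); once $\beta_i=-\tfrac12 s(\theta)u_i$ is in hand, the Jacobian is diagonal with strictly negative entries exactly as you say.
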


Hence for each $e\in E$, the potential function
$\mathcal{E}_{e}(u_{1},u_{2}) =\int^{(u_{1},u_{2})}(-\beta_1)\mathrm{d}u_{1}
	+(-\beta_2)\mathrm{d}u_{2}$
is well defined on $\mathbb{R}^2_{<0}$.
By Lemma \ref{Lem: matrix negative 1},
$\mathcal{E}_{e}(u_{1},u_{2})$ is strictly convex on $U_{e}$ with $\nabla_{u_i}\mathcal{E}_{e}(u_{1},u_{2})=-\beta_i$ for $i=1,2$.
Furthermore, the function
\begin{equation*}
	\mathcal{E}(u)=\sum_{e=p\cap w\in E}2\mathcal{E}_{e}(u_{p}, u_{w})
\end{equation*}
is strictly convex on $U$ with $\nabla_{u_i}\mathcal{E}(u)=-\sum_{e;e<f_i}2\beta_{(e,f_i)}
=-K_i$ for $i\in\{1,...,|F|\}$.

\subsection{Limit behaviors of the generalized angles}
In this section, we consider the limit behavior of the generalized hyperbolic circle pattern in the case of (0, 0, $\delta$), in order to solve the existence problem. Here $\delta\in \{1,-1,0\}$.
\begin{lemma}\label{limit 1}
Given $\theta\in \mathring{I}_\delta$,
for the $(0,0,\delta)$ type generalized hyperbolic triangle,
\begin{description}
	\item[(1)] if $u_{i}$ converges to $-\infty$ , then $\beta_{i}$ converges to $ +\infty$, where $i=1,2$;
		
	\item[(2)] if $u_{i}$ converges to $0$ , then $\beta_{i}$ converges to $ 0$, where $i=1,2$.
\end{description}
\end{lemma}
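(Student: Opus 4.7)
My plan is to derive, for each $\delta\in\{1,-1,0\}$, a closed-form expression for $\beta_1$ as a function of $r_1$ and $\theta$ \emph{alone}, by substituting the given formula for $e^{l_{12}}$ back into the formula for $\beta_1^2$. The striking simplification is that $r_2$ cancels out completely in every case, leaving $\beta_1$ as an explicit positive multiple of $e^{-r_1}$; after rewriting this through $u_1=-2e^{-r_1}$, both limits in the statement fall out immediately. The analogous formulas for $\beta_2$ follow by the symmetry $1\leftrightarrow 2$.

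Concretely, for $\delta=1$ I substitute $e^{l_{12}}=e^{r_1+r_2}\sin^2(\theta/2)$ from (\ref{001chl12}) into (\ref{001beta1}) and obtain
\begin{equation*}
\beta_1^2=\frac{e^{r_2}}{e^{r_1+l_{12}}}-\frac{1}{e^{2r_1}}=\frac{1}{e^{2r_1}}\left(\frac{1}{\sin^2(\theta/2)}-1\right)=\frac{\cot^2(\theta/2)}{e^{2r_1}},
\end{equation*}
so $\beta_1=\cot(\theta/2)\cdot e^{-r_1}$. The identical maneuver using (\ref{00-1beta1}) and (\ref{00-1chl12}), together with $1+\sinh^2=\cosh^2$, gives $\beta_1=\coth(\theta/2)\cdot e^{-r_1}$ for $\delta=-1$. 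For $\delta=0$, eliminating $\beta_1$ between (\ref{000beta1}) and (\ref{000shl12}) produces $e^{l_{12}}=\theta^2 e^{r_1+r_2}$, which when fed back into (\ref{000beta1}) yields $\beta_1=\theta^{-1}e^{-r_1}$. In every case the coefficient $C(\theta)\in\{\cot(\theta/2),\coth(\theta/2),\theta^{-1}\}$ is strictly positive on the admissible range $\mathring{I}_\delta$; for $\delta=1$ this uses $\theta\in(0,\pi)$ so that $\cos(\theta/2)>0$, which also justifies taking the positive square root.

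Translated through $u_1=-2e^{-r_1}$, all three formulas collapse to the same linear relation
\begin{equation*}
\beta_1=-\tfrac{1}{2}\,C(\theta)\,u_1,
\end{equation*}
so $\beta_1$ is a strictly positive constant times $-u_1$. Consequently $u_1\to -\infty$ forces $\beta_1\to +\infty$, and $u_1\to 0^-$ forces $\beta_1\to 0$, which is exactly (1) and (2) for $i=1$; the case $i=2$ is identical. There is no serious obstacle here: the entire content of the lemma is the fact that $\beta_i$ depends only on $u_i$ and not on $u_j$, which is a direct algebraic consequence of the two base vertices being of type $0$ (cusps), and the only mild care needed is to confirm positivity of $C(\theta)$ when selecting the square root of $\beta_1^2$.
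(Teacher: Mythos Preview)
Your proof is correct and follows essentially the same approach as the paper: in both cases the key step is to derive the closed-form expression $\beta_i=-\tfrac{1}{2}C(\theta)\,u_i$ (the paper writes $s(\theta)$ for your $C(\theta)$) by substituting the formula for $e^{l_{12}}$ into the expression for $\beta_i^2$, after which the limits are immediate. The only cosmetic difference is that for $\delta=\pm 1$ you eliminate $l_{12}$ using (\ref{001chl12}) and (\ref{00-1chl12}) alone, whereas the paper also invokes the sine-law relations (\ref{001shl12}) and (\ref{00-1shl12}); your route is slightly more economical but arrives at the identical formula.
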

\proof
For $\delta=1$,
submitting (\ref{001chl12}) and the first equation of (\ref{001shl12}) into (\ref{001beta1}) gives
\begin{equation*}
	\beta_{1}
	=e^{-r_1}\cot\frac{\theta}{2} .
\end{equation*}
Since $u_{i}=-2e^{-r_i}$, we have
$\beta_{1}
=-\frac{1}{2}u_1\cot\frac{\theta}{2}.$
Similarly, we have
$\beta_{2}
=-\frac{1}{2}u_2\cot\frac{\theta}{2}.$

For $\delta=-1$, submitting (\ref{00-1chl12}) and the first equation of (\ref{00-1shl12}) into (\ref{00-1beta1}) gives
\begin{equation*}
	\beta_{1}=e^{-r_1}\coth\frac{\theta}{2}.
\end{equation*} 	
Since $u_{i}=-2e^{-r_i}$, we have
$\beta_{1}=-\frac{1}{2}u_1\coth\frac{\theta}{2}.$
Similarly, we have
$\beta_{2}=-\frac{1}{2}u_2\coth\frac{\theta}{2}$.

For $\delta=0$, submitting the first equation of (\ref{000shl12}) into (\ref{000beta1}) gives
\begin{equation*}
	\beta_{1}
	=e^{-r_1}\theta^{-1}.
\end{equation*}
Since $u_{i}=-2e^{-r_{i}}$, we have
$\beta_1=-\frac{1}{2}u_1\theta^{-1}.$
Similarly, we have
$\beta_2=-\frac{1}{2}u_2\theta^{-1}.$

In conclusion,
\begin{equation}\label{equ:betai}
\beta_{i}=-\frac{1}{2}s(\theta)u_i
\end{equation}
for $i = 1$, $2$, where $s(\theta)$ equals to $\cot\frac{\theta}{2}$, $\coth\frac{\theta}{2}$ or $\theta^{-1} $ if $\delta=$ $1$, $-1$ or $0$ respectively. Note that $s(\theta)>0$.

\textbf{(1):}
If $u_{i}$ converges to $-\infty$ , then
$\lim_{u_{i} \to -\infty}\beta_{i} =-\lim_{u_{i} \to -\infty}\frac{1}{2}u_is(\theta)
=+\infty.$		

\textbf{(2):}
If $u_{i}$ converges to $0$ , then
$\lim_{u_{i} \to 0}\beta_{i}=-\lim_{u_{i} \to 0}\frac{1}{2}u_is(\theta)=0.$
\qed

\subsection{Existence theorem}

We rewrite Theorem \ref{thm of existence} (1) as follows.

\begin{theorem}\label{existence 1}
	Suppose $\Sigma$ is a cellular decomposed surface. If $\epsilon=0$, $\delta \in \{1,-1,0\}$ and $\theta: E\to \mathring{I}_\delta$,
	then for any function $\hat{K}\in \mathbb{R}^{|F|}_{>0}$,
	there exists a unique radii $r\in \mathbb{R}^{|F|}$ such that
	\begin{equation*}
		K_i(r)=\hat{K}_i , \quad \forall i\in \{1,...,|F|\},
	\end{equation*}
	if and only if $\hat{K}\in \mathbb{R}_{>0}^{|F|}$.
\end{theorem}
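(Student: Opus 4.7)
The plan is to follow the continuity method used in the proof of Theorem \ref{existence 2}, which in the $(0, 0, \delta)$ setting becomes much simpler thanks to the explicit linear formula (\ref{equ:betai}). The ``only if'' direction is automatic, since each $K_i$ is a positive sum of positive generalized angles and hence lies in $\mathbb{R}_{>0}^{|F|}$ for every $u \in U$; so the substantial content is existence, with uniqueness immediate from Theorem \ref{Thm: GL}.

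First I would define the smooth map $Q = \nabla \mathcal{E}: U \to \mathbb{R}_{<0}^{|F|}$ sending $u$ to $-K(u)$. By Theorem \ref{Thm: GL}, $Q$ is a smooth embedding, so $Q(U)$ is open in $\mathbb{R}_{<0}^{|F|}$. Since $\mathbb{R}_{<0}^{|F|}$ is connected, it then suffices to prove that $Q(U)$ is also closed in $\mathbb{R}_{<0}^{|F|}$. Take any sequence $\{u^m\}_{m=1}^\infty \subset U$ with $Q(u^m) \to t \in \mathbb{R}_{<0}^{|F|}$ and, after passing to a subsequence, assume $u^m \to s \in [-\infty, 0]^{|F|}$; the task reduces to ruling out $s \in \partial U$.

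The boundary decomposes as $\partial U = \partial_0 U \cup \partial_\infty U$, and Lemma \ref{limit 1} treats each part coordinate-by-coordinate. If $s_i = 0$ for some $i$, then Lemma \ref{limit 1}(2) forces $\beta_{(e, f_i)}(u^m) \to 0$ for every edge $e < f_i$, hence $K_i(u^m) \to 0$, contradicting $t_i < 0$. If $s_i = -\infty$ for some $i$, then Lemma \ref{limit 1}(1) forces $\beta_{(e, f_i)}(u^m) \to +\infty$ for every edge $e < f_i$, hence $K_i(u^m) \to +\infty$, contradicting the finiteness of $t_i$. Either case yields a contradiction, so $s \in U$ and $Q(U)$ is closed.

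I do not expect a real obstacle. The reason the argument is so much lighter than in the $(1, 1, 0)$ case is that equation (\ref{equ:betai}) shows each angle $\beta_i$ in a $(0, 0, \delta)$ generalized triangle to be linear in $u_i$ alone with a strictly positive coefficient depending only on $\theta$. Hence $K_i$ decouples as a strictly positive multiple of $-u_i$, and the two boundary limits in Lemma \ref{limit 1} become essentially tautological rather than the delicate two-variable asymptotics needed in Lemma \ref{limit 2}. This same linearity in fact yields the explicit inverse $u_i = -\hat{K}_i / \sum_{e < f_i} s(\theta(e))$, giving a one-line proof of existence; I would nevertheless present the continuity-method version for uniformity with the $(1, 1, 0)$ case treated above.
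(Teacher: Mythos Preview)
Your proposal is correct, and you have in fact identified both viable approaches. The paper, however, makes the opposite presentational choice: it goes directly with the one-line argument you relegate to your final sentence. From (\ref{equ:betai}) the paper computes $-K_i(u)=\sum_{e<f_i} s(\theta(e))\,u_i$, observes this is linear in $u_i$ with positive coefficient, and concludes immediately that $Q$ is a homeomorphism from $\mathbb{R}^{|F|}_{<0}$ onto itself. Your continuity-method argument via Lemma \ref{limit 1} is valid and has the virtue of structural parallelism with Theorem \ref{existence 2}, but it is strictly more work than necessary here; the paper's version exploits the decoupling fully and dispenses with any boundary analysis or appeal to Theorem \ref{Thm: GL}.
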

\proof
Define the map $Q=\nabla \mathcal{E}(u): U \to \mathbb{R}_{<0}^{|F|},  (u_{1},..., u_{|F|}) \longmapsto (-K_{1},... , -K_{|F|})$. Since
$$ -K_i(u)=-\sum_{e;e<f_i}2\beta_{(e,f_i)}=\sum_{e;e<f_i}s(\theta(e))u_i,$$
which is a linear function of $u_i$, then Q is a homeomorphism from $\mathbb{R}^{|F|}_{<0}$ to $\mathbb{R}^{|F|}_{<0} $.
Thus we finish the proof.
\qed

\section{Combinatorial curvature flows in the case of (0, 0, $\delta$)}\label{Prescribed curvature flows in the case of (0,0,delta)}
In this section, we introduce two combinatorial curvature flows for the generalized hyperbolic circle patterns on surfaces of type $(0, 0, \delta)$ and prove the longtime existence and convergence of the solutions for these combinatorial curvature flows. Given two functions
$\theta: E\to \mathring{I}_\delta$ and $\hat{K}: F\to  \mathbb{R}$, consider two functions $\tilde{\mathcal{E}}(u) $ and $\mathcal{C}(u)$ defined in section \ref{Prescribed curvature flows in the case of (1,1,0)}.
It is easy to check that $\tilde{\mathcal{E}}(u)$ is a strictly convex function on $U$ with $\nabla_{u_i} \tilde{\mathcal{E}}(u)=\hat{K}_i-K_i$.
Furthermore, $\nabla_{u_i} \mathcal{C}(u)=\Delta(K-\hat{K})_i$.

\subsection{The longtime behavior of the combinatorial Ricci flow}
By a change of variables, i.e., $u_{i}=-2e^{-r_{i}}$, (\ref{Eq: CRF}) can be reformulated as
\begin{eqnarray}\label{flow:u1}
	\frac{\mathrm{d}u_i}{\mathrm{d}t}=K_i-\hat{K}_i.
\end{eqnarray}
Hence, the combinatorial Ricci flow (\ref{flow:u1}) is a negative gradient flow of $\tilde{\mathcal{E}}(u)$.
Furthermore, follow the same steps as in section \ref{Prescribed curvature flows in the case of (1,1,0)}, we have that $\tilde{\mathcal{E}}(u)$ and $\mathcal{C}(u)$ are decreasing along the combinatorial Ricci flow (\ref{flow:u1}).

We rewrite Theorem \ref{flow2} for the combinatorial Ricci flow as follows.
\begin{theorem}\label{Thm: CRF}
	Under the same assumptions as those in Theorem \ref{thm of existence} (2),
	the solution of the combinatorial Ricci flow (\ref{flow:u1}) for the generalized circle patterns of type $(0,0,\delta)$  exists for all time and converges exponentially fast for any initial data if and only if $\hat{K}\in \mathbb{R}_{>0}^{|F|}$. Here $\delta$ $= 1$, $ 0$ or $-1$.
\end{theorem}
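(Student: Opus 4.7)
The plan is to exploit the crucial observation contained in equation (\ref{equ:betai}) inside the proof of Lemma \ref{limit 1}: in the $(0,0,\delta)$ case the generalized angle at the vertex $f_i^\ast$ in the triangle across an edge $e$ is
\[ \beta_{(e,f_i)} = -\tfrac{1}{2}s(\theta(e))\,u_i, \]
which depends linearly on $u_i$ alone, \emph{without} any dependence on the coordinate of the neighboring face. Summing over the edges incident to $f_i$, the generalized combinatorial curvature becomes
\[ K_i(u) = \sum_{e;\,e<f_i}2\beta_{(e,f_i)} = -c_i\,u_i,\qquad c_i := \sum_{e;\,e<f_i}s(\theta(e)) > 0. \]
Consequently, the combinatorial Ricci flow (\ref{flow:u1}) decouples into $|F|$ independent scalar linear ODEs
\[ \frac{\mathrm{d}u_i}{\mathrm{d}t} = -c_i u_i - \hat{K}_i,\qquad i = 1,\ldots,|F|. \]

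I would then integrate each ODE explicitly to obtain
\[ u_i(t) = u_i^\ast + (u_i(0)-u_i^\ast)\,e^{-c_i t},\qquad u_i^\ast := -\hat{K}_i/c_i. \]
For the ``if'' direction, assume $\hat{K}_i>0$ for every $i$, so $u_i^\ast<0$. Since the explicit solution is a monotone interpolation between $u_i(0)<0$ and $u_i^\ast<0$, the trajectory remains in the closed interval $[\min(u_i(0),u_i^\ast),\max(u_i(0),u_i^\ast)]\subset(-\infty,0)$ for all $t\ge 0$; hence $u(t)\in U$ for all time. The estimate $|u_i(t)-u_i^\ast| = |u_i(0)-u_i^\ast|\,e^{-c_i t}$ gives exponential convergence with rate at least $\min_i c_i>0$, and continuity of $K$ then yields $K(u(t))\to K(u^\ast)=\hat{K}$ exponentially.

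For the ``only if'' direction, suppose the solution exists for all $t\ge 0$ and converges to some $u^\ast\in U$. Then $\mathrm{d}u_i/\mathrm{d}t\to 0$, so $K_i(u^\ast)=\hat{K}_i$; since $u^\ast\in U=(-\infty,0)^{|F|}$ we have $u_i^\ast<0$, and the linear relation $K_i=-c_i u_i$ gives $\hat{K}_i=-c_iu_i^\ast>0$.

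I do not expect a substantial obstacle here: the whole theorem reduces, via the linearity $K_i=-c_iu_i$, to elementary analysis of a decoupled linear ODE system. This is in sharp contrast to the $(1,1,0)$ case treated in Theorem \ref{Thm: CRF110}, where the curvature map is genuinely nonlinear in $u$, the admissible space is preserved only through a monotonicity/escape-to-boundary argument, and convergence must be extracted via the convex energy $\tilde{\mathcal{E}}(u)$ together with Lyapunov stability. Here both tasks are immediate.
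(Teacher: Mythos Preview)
Your proof is correct and takes a genuinely more elementary route than the paper. The paper does not give a separate argument for this theorem; it simply states that the proof is ``almost the same as that of Theorem \ref{Thm: CRF110}'' and omits it, so implicitly it relies on the convex potential $\tilde{\mathcal{E}}$, the boundary analysis via Lemma \ref{limit 1} to prevent escape from $U$, and Lyapunov stability at the critical point. You instead exploit the structural simplification specific to the $(0,0,\delta)$ case---already visible in the proof of Theorem \ref{existence 1}---that $K_i=-c_iu_i$ is linear and completely decoupled. This collapses (\ref{flow:u1}) into $|F|$ independent scalar linear ODEs which you integrate in closed form, obtaining global existence, invariance of $U$ (since $u_i(t)$ is a convex combination of $u_i(0)<0$ and $u_i^\ast<0$), and exponential convergence in one stroke, with no need for energy methods or stability theory. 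The only step worth tightening is ``Then $\mathrm{d}u_i/\mathrm{d}t\to 0$'': convergence of $u_i(t)$ alone does not force convergence of $u_i'(t)$, but here the right-hand side $K_i(u(t))-\hat{K}_i$ is continuous in $u$ and hence converges to $K_i(u^\ast)-\hat{K}_i$; a nonzero limit of $u_i'$ would contradict boundedness of $u_i$, so the limit is zero (or one can invoke the mean-value argument used in the paper's proof of Theorem \ref{Thm: CRF110}). With that one-line justification added, your argument is complete and in fact sharper than the paper's intended one.
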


The proof of Theorem \ref{Thm: CRF} is almost the same as that of Theorem \ref{Thm: CRF110}, so we omit it here.
%
\subsection{The longtime behavior of the combinatorial Calabi flow}
By a change of variables, i.e., $u_{i}=-2e^{-r_{i}}$, (\ref{Eq: CCF}) can be reformulated as
\begin{equation}\label{flow:u2}
	\frac{\mathrm{d}u_{i}}{\mathrm{d}t}=
	-\Delta(K-\hat{K})_i.
\end{equation}
Hence, the combinatorial Calabi flow (\ref{flow:u2}) is a negative gradient flow of $\mathcal{C}(u)$.
Furthermore, follow the same steps as section \ref{Prescribed curvature flows in the case of (1,1,0)}, we have that $\tilde{\mathcal{E}}(u)$ and $\mathcal{C}(u)$ are decreasing along the combinatorial Calabi flow (\ref{flow:u2}).

We rewrite Theorem \ref{flow2} for the combinatorial Calabi flow as follows.
\begin{theorem}\label{Thm: CCF}
	Under the same assumptions as in Theorem \ref{thm of existence} (2),
	the solution of the combinatorial Calabi flow (\ref{flow:u2}) for the generalized circle patterns of type $(0,0,\delta)$ exists for all time and converges exponentially fast for any initial data if and only if $\hat{K}\in \mathbb{R}_{>0}^{|F|}$. Here $\delta$ = 1, 0 or -1.
\end{theorem}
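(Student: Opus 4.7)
The plan is to mirror the structure of the proof of Theorem \ref{Thm: CCF110} but to exploit a crucial simplification special to the $(0,0,\delta)$ case. Formula (\ref{equ:betai}) shows that each generalized angle $\beta_{(e,f_i)}$ depends linearly on $u_i$ alone and is independent of $u_j$, so $K_i$ is an affine function of $u_i$ only, and the discrete Laplacian $\Delta = (\partial K_i/\partial u_j)$ is a constant diagonal negative-definite matrix with entries $\Delta_{ii} = -\sum_{e;\, e<f_i} s(\theta(e))$. This turns (\ref{flow:u2}) into an explicit linear ODE and bypasses most of the delicate analysis used in the $(1,1,0)$ case.

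For the necessity direction, I would argue as in the first half of the proof of Theorem \ref{Thm: CCF110}: the assumed convergence $u(t) \to u^\ast \in U$ combined with the mean value theorem applied on intervals $(n, n+1)$ gives $\Delta(K(u^\ast) - \hat{K}) = 0$; invertibility of $\Delta$ by Lemma \ref{Lem: matrix negative 1} forces $K(u^\ast) = \hat{K}$; and since every $\beta_{(e,f_i)}$ is strictly positive on $U$ by (\ref{equ:betai}), we conclude $\hat{K}_i = K_i(u^\ast) > 0$ for each $i$.

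For the sufficiency direction, given $\hat{K} \in \mathbb{R}_{>0}^{|F|}$, Theorem \ref{existence 1} supplies a unique $\tilde{u} \in U$ with $K(\tilde{u}) = \hat{K}$. Substituting $K - \hat{K} = \Delta(u - \tilde{u})$ into (\ref{flow:u2}) yields the decoupled linear system $\frac{d u_i}{dt} = -\Delta_{ii}^{2}(u_i - \tilde{u}_i)$, whose explicit solution $u_i(t) - \tilde{u}_i = e^{-\Delta_{ii}^{2} t}(u_i(0) - \tilde{u}_i)$ is defined for all $t \ge 0$ and converges exponentially fast to $\tilde{u}$. Since $\tilde{u}_i < 0$ and $u_i(0) < 0$ and each coordinate evolves monotonically between these two values, $u(t)$ remains in $U = (-\infty,0)^{|F|}$ throughout.

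The step that was the main obstacle in the $(1,1,0)$ case — ruling out escape to $\partial_{0} U$ via the monotonicity argument based on Lemma \ref{betaibetaj} — disappears here because the explicit exponential formula makes confinement to $U$ immediate. I therefore expect the proof to be considerably shorter than that of Theorem \ref{Thm: CCF110}, with no genuine analytic obstacle; the only thing worth double-checking is that the $\Delta_{ii}$ are indeed strictly negative (so that $-\Delta_{ii}^{2} < 0$ and the linear ODE is a stable contraction), which follows directly from $s(\theta) > 0$.
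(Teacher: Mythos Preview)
Your proposal is correct and takes a genuinely different, more elementary route than the paper. The paper follows the same template as Theorem \ref{Thm: CCF110}: it uses the decrease of $\tilde{\mathcal{E}}$ along the flow to confine $u(t)$ to a compact subset of $\mathbb{R}^{|F|}$, then runs a barrier argument at $\partial_0 U$ by computing $\frac{du_i}{dt}=\sum_{e<f_i} s(\theta(e))(K_i-\hat{K}_i)$ and invoking the monotonicity trick from Theorem \ref{Thm: CRF110}. You instead observe that (\ref{equ:betai}) makes $K_i=\Delta_{ii}u_i$ with $\Delta$ a constant negative diagonal matrix, so (\ref{flow:u2}) is the linear decoupled system $\dot u_i=-\Delta_{ii}^2(u_i-\tilde u_i)$, which you solve explicitly; confinement to $U$ and exponential convergence are then immediate from the formula $u_i(t)=e^{-\Delta_{ii}^2 t}u_i(0)+(1-e^{-\Delta_{ii}^2 t})\tilde u_i$ being a convex combination of two negative numbers. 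Your argument is shorter and avoids the energy and barrier machinery entirely; the paper's approach has the virtue of being uniform with the $(1,1,0)$ case, but in the $(0,0,\delta)$ setting this uniformity obscures the fact that the flow is exactly solvable.
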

\proof
The proof of necessity is the same as Theorem \ref{Thm: CCF110}, so we omit it here. If $\hat{K}\in \mathbb{R}_{>0}^{|F|}$,
then by Theorem \ref{thm of existence},
there exists a unique $\tilde{u}\in U$ such that $K_i(\tilde{u})=\hat{K}_i$ for all $i\in \{1,2,...,|F|\}$.
Therefore, $\tilde{\mathcal{E}}(u)$ has a critical point $\tilde{u}$ and is bounded from below.
Since $\tilde{\mathcal{E}}(u)$ is decreasing along the combinatorial Calabi flow (\ref{flow:u2}),
then the solution $u(t)$ is contained in a compact subset of $\mathbb{R}^{|F|}$. We also claim that $u(t)$ is contained in a compact subset of $U=(-\infty,0)^{|F|}$. Assuming the claim, then by the same discussion as in Theorem \ref{Thm: CCF110}, we prove the theorem.

Now we prove the claim.
The boundary of $U$ consists of the following two parts
\begin{enumerate}
	\item [(1)]
	$\partial_{\infty}U=\{u\in [-\infty,0]^{|F|}| \text{there exists at least one $i\in \{1,2,...,|F|\}$ such that $u_i=-\infty$}\}$;
	\item [(2)]
	$\partial_{0}U=\{u\in [-\infty,0]^{|F|}| \text{there exists at least one $i\in \{1,2,...,|F|\}$ such that $u_i=0$}\}$.
\end{enumerate}
Similar to the proof of Theorem \ref{Thm: CRF110},
the solution $u(t)$ can not reach the boundary $\partial_{\infty}U$. Then we just need to consider the second part of the boundary.

Suppose the solution $u(t)$ of the combinatorial Calabi flow ($\ref{flow:u2}$) can reach the boundary $\partial_{0}U $.
Then there exists some time $t'\in (0,+\infty]$ and $i\in \{1,2,...,|F|\}$ such that $\lim_{t\rightarrow t'}u_i(t)=0$.
According to (\ref{equ:betai}), for $e=f_i\cap f_j\in E$, we have
\begin{equation}\label{equ:beta}
	\beta_{(e,f_i)}=-\frac{1}{2}u_is(\theta(e)),
\end{equation}
where $s(\theta(e))$ equals to $\cot\frac{\theta(e)}{2}$, $\coth\frac{\theta(e)}{2}$, or $\theta(e)^{-1} $ if $\delta=$ $1$, $-1$ or $0$ respectively.

Differentiating (\ref{equ:beta}) in $u_i$ and $u_j$ gives
\begin{equation*}
	\frac{\partial \beta_{(e,f_i)}}{\partial u_i}=-\frac{1}{2}s(\theta(e)), \quad
	\frac{\partial \beta_{(e,f_i)}}{\partial u_j}=0.
\end{equation*}

Note that
\begin{align*}
	\frac{\mathrm{d}u_i}{\mathrm{d}t}&=-\Delta(K-\hat{K})_i\\&=-\frac{\partial K_i}{\partial u_i}(K_i-\hat{K_i})-\sum_{j\ne i}\frac{\partial K_i}{\partial u_j}(K_j-\hat{K_j})\\
	&=-2\sum_{e=f_i\cap f_j\in E}\frac{\partial \beta_{(e,f_i)}}{\partial u_i}(K_i-\hat{K_i})-2\sum_{e=f_i\cap f_j\in E}\frac{\partial \beta_{(e,f_i)}}{\partial u_j}(K_j-\hat{K_j})\\
	&=\sum_{e=f_i\cap f_j\in E}s(\theta(e))(K_i-\hat{K_i}).
\end{align*}
The rest of the proof is the same as that of the combinatorial Ricci flow in Theorem \ref{Thm: CRF110},
we omit it here.
\qed

\end{document}